\theoremstyle{plain}
\newtheorem{theorem}{Theorem}[section]
\newtheorem{lemma}[theorem]{Lemma}
\newtheorem{example}[theorem]{Example}
\newtheorem{corollary}[theorem]{Corollary}
\newtheorem{proposition}[theorem]{Proposition}
\newcommand{\N}{\mathbb{N}}
\theoremstyle{remark}
\newcommand{\Z}{\mathbb{Z}}
\begin{document}

\title{Generalized Factorization in Commutative Rings with Zero-Divisors}
        \date{\today}
\author{Christopher Park Mooney}
\address{Reinhart Center \\ Viterbo University \\ 900 Viterbo Drive \\ La Crosse, WI 54601}
\email{cpmooney@viterbo.edu}
\keywords{Factorization, Commutative Rings, Zero-Divisors }


\begin{abstract}
Much work has been done on generalized factorization techniques in integral domains, namely $\tau$-factorization.  There has also been substantial progress made in investigating factorization in commutative rings with zero-divisors.  This paper seeks to synthesize work done in these two areas and extend the notion of $\tau$-factorization to commutative rings that need not be domains.  In addition, we look into particular types of $\tau$ relations, which are interesting when there are zero-divisors present.  We then proceed to classify commutative rings that satisfy the finite factorization properties given in this paper. \\
\vspace{.1in}\noindent \textbf{2010 AMS Subject Classification:} 13A05, 13E99, 13F15\end{abstract}

\maketitle
\section{Introduction}
This paper concerns generalized factorization in a commutative ring $R$ with $1$.  Much work has been done on generalized factorization techniques in integral domains;  D.D. Anderson and A. Frazier provide an excellent overview in \cite{Frazier}.  Many authors have investigated ways to extend notions of factorization in domains to commutative rings with zero-divisors. This list includes, but is not limited to \cite{Chun, Valdezleon, Valdezleon3, Agargun, Axtell, Fletcher, Fletcher2}.  The goal of this paper is to extend $\tau$-factorization to rings that contain zero-divisors.
\\
\indent In Section Two, we give some preliminary definitions and results about both $\tau$-factorization in integral domains as well as factorization in commutative rings with zero-divisors.  In Section Three, we define several types of $\tau$-irreducible elements.  Section Four studies various finite $\tau$-factorization properties.  In Section Five, we introduce a particular $\tau$ relation which is natural in rings with zero-divisors.  Furthermore, we provide a thorough examination of rings which satisfy the various $\tau$-finite factorization properties laid forth in Section Four.

\section{Preliminary Definitions and Results}
\indent We let $R$ be a commutative ring with identity.  Let $R^*=R-\{0\}$, $U(R)$ be the units of $R$, and $R^{\#}=R^*-U(R)$, the non-zero, non-units of $R$.  As in \cite{Valdezleon}, we let $a \sim b$ if $(a)=(b)$, $a\approx b$ if there exists $\lambda \in U(R)$ such that $a=\lambda b$, and $a\cong b$ if (1) $a\sim b$ and (2) $a=b=0$ or if $a=rb$ for some $r\in R$ then $r\in U(R)$.
\\
\indent Let $\tau$ be a relation on $R^{\#}$, that is, $\tau \subseteq R^{\#} \times R^{\#}$.  In this paper, we will always assume that $\tau$ is symmetric.  We say $\tau$ is \emph{multiplicative} (resp. \emph{divisive}) if for $a,b,c \in R^{\#}$ (resp. $a,b,b' \in R^{\#}$), $a\tau b$ and $a\tau c$ imply $a\tau bc$ (resp. $a\tau b$ and $b'\mid b$ imply $a \tau b'$).  We say $\tau$ is \emph{associate} (resp. \emph{strongly associate}, \emph{very strongly associate) preserving} if for $a,b,b'\in R^{\#}$ with $b\sim b'$ (resp. $b\approx b'$, $b\cong b'$) $a\tau b$ implies $a\tau b'$.  As in \cite{Stickles}, a ring $R$ is said to be \emph{strongly associate} (resp. \emph{very strongly associate}) ring if for any $a,b \in R$, $a\sim b$ implies $a \approx b$ (resp. $a \cong b$).
\\
\indent For a non-unit $a \in R$, we define $a=\lambda a_1 \cdots a_n$, $\lambda \in U(R)$, $a_i \in R^{\#}$ to be a \emph{$\tau$-factorization of} $a$ if $a_i\tau a_j$ for each $i\neq j$.  We call $a=\lambda(\lambda^{-1}a)$ a \emph{trivial $\tau$-factorization of $a$}.  We say that $a$ is a \emph{$\tau$-product} of the $a_i$'s and that $a_i$ is a $\tau$-\emph{factor} or a $\tau$-\emph{divisor} of $a$.  We do not allow $0$ to occur as a $\tau$-factor of a non-trivial $\tau$-factorization; however, we do allow the trivial factorization, $0=\lambda 0$ for $\lambda \in U(R)$.  For $a,b \in R^{\#}$ we say that $a$ \emph{$\tau$-divides $b$}, written $a\mid_{\tau}b$, if $a$ occurs as a $\tau$-factor in some $\tau$-factorization of $b$.  Note that if $a=\lambda a_1 \cdots a_n$ is a $\tau$-factorization, then for $\sigma \in S_n$, the symmetric group on $n$ letters, so is each rearrangement of $a=\lambda a_{\sigma(1)}\cdots a_{\sigma(n)}$ because $\tau$ is assumed to be symmetric.
\\
\indent A \emph{$\tau$-refinement} of a $\tau$-factorization $\lambda a_1 \cdots a_n$ is a $\tau$-factorization of the form
$$\lambda \cdot b_{1_1}\cdots b_{1_{m_1}}\cdot b_{2_1}\cdots b_{2_{m_2}} \cdots b_{n_1} \cdots b_{n_{m_n}}$$
where $a_i=b_{i_1}\cdots b_{i_{m_i}}$ is a $\tau$-factorization for each $i$.  We say that $\tau$ is \emph{refinable} if every $\tau$-refinement of a $\tau$-factorization is a $\tau$-factorization.  We say $\tau$ is \emph{combinable} if whenever $\lambda a_1 \cdots a_n$ is a $\tau$-factorization, then so is each $\lambda a_1 \cdots a_{i-1}(a_ia_{i+1})a_{i+2}\cdots a_n$.
\\
\indent We pause briefly to give some examples of particular relations $\tau$.
\begin{example} \end{example}
\indent(1) $\tau=R^{\#}\times R^{\#}$.  This yields the usual factorizations in $R$ and $\mid_{\tau}$ is the same as the usual divides. Moreover, $\tau$ is multiplicative and divisive (hence associate preserving as we shall soon see).  This case shows $\tau$-factorization is a generalization of the usual factorization in commutative rings with zero-divisors.
\\ \indent(2) $\tau=\emptyset$.  For every $a\in R^{\#}$, there is only the trivial factorization and $a\mid{_\tau} b \Leftrightarrow a=\lambda b$ for $\lambda \in U(R)$ $\Leftrightarrow a\approx b$.  Again $\tau$ is both multiplicative and divisive (vacuously).
\\ \indent(3) Let $S$ be a non-empty subset of $R^{\#}$ and let $\tau=S\times S$.  Define $a\tau b \Leftrightarrow a,b\in S$. So $\tau$ is multiplicative (resp. divisive) if and only if $S$ is multiplicatively closed (resp. closed under non-unit factors).  A non-trivial $\tau$-factorization is (up to unit factors) a factorization into elements from $S$.
\\ \indent(4) Let $I$ be an ideal of $R$ and define $a \tau b$ if and only if $a-b\in I$.  This relation is certainly symmetric, but need not be multiplicative or divisive.  Let $R=\Z$ and $I=(5)$.  We have $7\tau 2$ and $7 \tau 7$, but $7 \not \tau 14$, showing $\tau$ is not multiplicative.  Moreover, $9 \tau 4$ and $2 \mid 4$; however, $9\not \tau 2$ showing $\tau$ is not divisive.
\\ \indent(5) Let $a \tau_z b \Leftrightarrow ab=0$.  The only non-trivial $\tau_z$-factorizations are $0=\lambda a_1 \cdots a_n$ where $a_i \cdot a_j=0$ for all $i \neq j$. Thus $a\in R^{\#}$ has only the trivial $\tau_z$-factorization.  This particular $\tau$ will be studied in Section Five.

\begin{lemma} \label{lem: relations}Let $R$ be a commutative ring and let $a,b\in R^{\#}$.
\\
(1) $a\cong b \Rightarrow a\approx b \Rightarrow a\sim b$.
\\
(2) $\sim$ and $\approx$ are equivalence relations.
\\
(3) $\cong$ need only be transitive and symmetric.
\\
(4) For $a\in R$, the following are equivalent.
\\\indent(a) $a\sim b$ for some $b\in R$ implies $a\cong b$.
\\\indent(b) $a \cong a$.
\\\indent(c) $a=0$ or $\text{ann}(a) \subseteq J(R)$.
\\ If $a$ satisfies one of the above conditions, then for $\lambda \in U(R)$, $a\cong b$ $\Leftrightarrow$ $a\cong \lambda b$.
\\(5) The following conditions are equivalent.
\\\indent(a) $R$ is very strongly associate.
\\\indent(b) $R$ is pr\'esimplifiable (for all $x,y \in R$, $xy=x$ implies $x=0$ or $y\in U(R)$).
\\\indent(c) $\cong$ is reflexive on $R$.
\\\indent(d) $\cong$ is an equivalence relation on $R$.
\\\indent(e) $\sim$, $\approx$, and $\cong$ all coincide on $R$.
\\ In particular, domains and quasi-local rings all satisfy the above conditions.
\end{lemma}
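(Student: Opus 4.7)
Parts (1)--(3) reduce, after routine unwinding, to a single nontrivial calculation. Part (1) is immediate: $a \cong b$ says in particular $a = rb$ for some $r \in R$ that the defining condition (or the case $a = b = 0$) forces to be a unit, so $a \approx b$; then $(a) = (b)$ is clear. Part (2) follows by taking $\lambda = 1$, inverting $\lambda$, and multiplying units for $\approx$, while $\sim$ is obvious from ideal equality. For the transitivity half of (3), if $a \cong b \cong c$ and $a = rc$, write $c = \gamma b$; the hypothesis $a \cong b$ applied to $a = r\gamma b$ forces $r\gamma \in U(R)$, so $r \in U(R)$.

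The main obstacle is symmetry of $\cong$. Given $a \cong b$ both nonzero, fix the unit $r_0$ with $a = r_0 b$, and suppose $b = sa$ for some $s$; the goal is to show $s$ must be a unit. From $a = r_0 s a$ we get $(1 - r_0 s)a = 0$, and hence $(1 - r_0 s)b = s(1 - r_0 s)a = 0$. Consequently, for every $c \in R$ the element $r_c := r_0 + (1 - r_0 s)c$ still satisfies $r_c b = a$, so $r_c \in U(R)$ by the hypothesis $a \cong b$. Specializing to $c = -r_0$ gives $r_c = r_0^2 s$, which is a unit; combined with $r_0 \in U(R)$ this yields $s \in U(R)$.

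For part (4), I plan the standard loop (a)$\Rightarrow$(b)$\Rightarrow$(c)$\Rightarrow$(a). For (b)$\Rightarrow$(c): given $a \neq 0$ and $x \in \text{ann}(a)$, the equation $(1 - x)a = a$ forces $1 - x \in U(R)$ by $a \cong a$, and since $\text{ann}(a)$ is an ideal this precisely says $\text{ann}(a) \subseteq J(R)$. For (c)$\Rightarrow$(a): if $a \sim b$ and $a \neq 0$, then writing $a = rb$, $b = sa$ gives $(1 - rs)a = 0$, hence $1 - rs \in J(R)$, so $rs$ (and therefore $r$) is a unit. The $\lambda$-invariance statement then follows from (a) together with $b \sim \lambda b$. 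Part (5) assembles by chaining the earlier parts: (a)$\Leftrightarrow$(e) is immediate from (1); (a)$\Leftrightarrow$(c) is part (4) applied pointwise in $a$; (c)$\Leftrightarrow$(d) combines reflexivity with the symmetry and transitivity of (3); and (c)$\Leftrightarrow$(b) reads (c) through (4) as ``$\text{ann}(a) \subseteq J(R)$ for every nonzero $a$,'' which is equivalent to presimplifiability via the translation $xy = x \Leftrightarrow 1 - y \in \text{ann}(x)$. Finally, domains and quasi-local rings satisfy (c) because $\text{ann}(a)$ is a proper ideal for $a \neq 0$, hence zero in a domain and contained in the unique maximal ideal in the quasi-local case.
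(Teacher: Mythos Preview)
Your argument is correct in every part; the symmetry trick for $\cong$ (perturbing $r_0$ by multiples of the annihilating element $1-r_0 s$ and then specializing $c=-r_0$) is clean, and the loop in (4) and the chaining in (5) are accurate, including the characterization of pr\'esimplifiability via $\text{ann}(a)\subseteq J(R)$.

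The paper, however, does not prove this lemma at all: its ``proof'' is a one-line citation to \cite[Theorem 2.2]{Valdezleon} and the surrounding discussion there. So your route is genuinely different in the sense that you supply a self-contained argument where the paper defers entirely to the literature. What this buys you is independence from the cited source and an explicit elementary verification (particularly of the symmetry of $\cong$, which is the only place a real idea is needed); what the paper's approach buys is brevity and a pointer to the original context in which these relations were introduced. If you want to match the paper you can simply cite Anderson--Valdes-Leon; if you prefer to keep your direct proof, it stands on its own.
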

\begin{proof}
See \cite[Theorem 2.2]{Valdezleon} and the discussion preceeding the theorem.
\end{proof}

The following theorem is a slight generalization of \cite[Proposition 2.2]{Frazier}.
\begin{theorem} Let $R$ be a commutative ring and $\tau$ a relation on $R^{\#}$. Let $a,b,b' \in R^{\#}$, $\lambda \in U(R)$.
\\ (1) If $\tau$ is divisive, then $\tau$ is associate (resp. strongly associate, very strongly associate) preserving.
\\ (2) If $\tau$ is divisive, then $\tau$ is refinable.
\\ (3) If $\tau$ is multiplicative, then $\tau$ is combinable.
\end{theorem}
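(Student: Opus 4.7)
The plan is to dispatch each of the three parts separately, since each reduces almost mechanically to the relevant definition once the right observation is unpacked. The governing idea is that divisivity of $\tau$ forces $\tau$-relations to be inherited by divisors, which handles both associate-preservation and refinement, while multiplicativity forces $\tau$-relations to be inherited by products, which handles combination.

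For (1), I would leverage Lemma \ref{lem: relations}(1), which gives the chain $\cong \Rightarrow \approx \Rightarrow \sim$. It therefore suffices to prove the weakest statement: if $b \sim b'$ then $a \tau b$ implies $a \tau b'$. But $b \sim b'$ means $(b) = (b')$, so in particular $b' \mid b$; one application of divisivity then finishes it. The strongly associate and very strongly associate cases follow because they each imply $\sim$.

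For (2), I would start with a $\tau$-factorization $\lambda a_1 \cdots a_n$ and a refinement obtained by $\tau$-factoring each $a_i = b_{i_1} \cdots b_{i_{m_i}}$, and then verify that every pair of the new factors is $\tau$-related. Pairs $b_{i_k}, b_{i_l}$ sitting inside the same block are $\tau$-related by hypothesis on the $\tau$-factorization of $a_i$. For a pair $b_{i_k}, b_{j_l}$ coming from distinct blocks $i \neq j$, the argument requires two applications of divisivity together with symmetry of $\tau$: from $a_i \tau a_j$ and $b_{i_k} \mid a_i$, divisivity (with symmetry) gives $b_{i_k} \tau a_j$; then from $b_{i_k} \tau a_j$ and $b_{j_l} \mid a_j$, divisivity gives $b_{i_k} \tau b_{j_l}$. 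This cross-block step is the one place requiring a little care, and I would expect to flag the use of symmetry explicitly.

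For (3), given a $\tau$-factorization $\lambda a_1 \cdots a_n$, I would check that $\lambda a_1 \cdots a_{i-1}(a_i a_{i+1}) a_{i+2} \cdots a_n$ is still a $\tau$-factorization. All pairs not involving the combined element $a_i a_{i+1}$ are already $\tau$-related from the original factorization. For any $a_j$ with $j \notin \{i, i+1\}$, we have both $a_j \tau a_i$ and $a_j \tau a_{i+1}$, so multiplicativity yields $a_j \tau a_i a_{i+1}$. No real obstacle arises here; this part is the cleanest of the three.
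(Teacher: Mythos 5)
Your proposal is correct: part (1) is essentially the paper's own argument (reduce the three associate notions to $\sim$ via Lemma \ref{lem: relations} and apply divisivity to $b'\mid b$), and for parts (2) and (3) the paper simply cites \cite{Frazier}, whereas you supply the standard proofs in full --- the cross-block step in (2) using symmetry plus two applications of divisivity, and the single application of multiplicativity in (3) --- both of which are valid. No gaps.
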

\begin{proof}
(1) Let $a\tau b$. Now $b\sim b' \Rightarrow b\mid b'$ and $b'\mid b$.  So by the definition of divisive, $a\tau b \Rightarrow a\tau b'$ and $a \tau b' \Rightarrow a\tau b$.  As $b\cong b'$ and $b\approx b'$ each imply $b\sim b'$ by Lemma \ref{lem: relations}, the result follows.  Proofs of (2) and (3) can be found in \cite{Frazier}.
\end{proof}
\section{Types of $\tau$-Irreducible Elements}
\label{sec: types of tau-irreducibles}
We would like to define what it means for an element to be $\tau$-irreducible in a ring with zero-divisors.  This definition needs to be consistent with the definitions of $\tau$-irreducible in domains as well as the various types of irreducible elements when zero-divisors are present.  These definitions are generalizations of those given in \cite{Valdezleon}.
\begin{proposition}\label{prop: irr} Let $R$ be a commutative ring and $\tau$ a relation on $R^{\#}$ with $a\in R$ a non-unit and $\lambda \in U(R)$.  Consider the following statements.
\\(1) $a=\lambda a_1 \cdots a_n$, $n\in \N$, is a $\tau$-factorization implies $a \sim a_i$ for some $1\leq i \leq n$.
\\(2) $(a)=(a_1)\cdots (a_n)$, $n\in \N$, with $a_i \tau a_j$ for all $i \neq j$ implies $(a)=(a_i)$ for some $1\leq i \leq n$.
\\(3) $a\sim a_1 \cdots a_n$, $n\in \N$, with $a_i\tau a_j$ for all $i \neq j$ implies $a \sim a_i$ for some $1\leq i \leq n$.
\\ We have (2) $\Leftrightarrow$ (3) $\Rightarrow$ (1).  If $R$ is strongly associate, we also have (1) $\Rightarrow$ (2).
\end{proposition}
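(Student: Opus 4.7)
The plan is to exploit the straightforward translation between the ideal-theoretic statement (2) and the associate-relation statement (3), and then to use the existence/non-existence of a unit $\lambda$ matching $a$ to the product $a_1\cdots a_n$ in order to pass between $\tau$-factorizations (which carry a unit $\lambda$) and products witnessing $a \sim a_1\cdots a_n$.

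For (2) $\Leftrightarrow$ (3), I would simply observe that by definition $x \sim y$ if and only if $(x) = (y)$, and that for principal ideals one has $(a_1)(a_2)\cdots(a_n) = (a_1 a_2 \cdots a_n)$. Thus the hypothesis $(a) = (a_1)\cdots(a_n)$ is identical to $a \sim a_1\cdots a_n$, and the conclusion $(a) = (a_i)$ is identical to $a \sim a_i$; the $\tau$-relations among the $a_i$'s are unchanged, so the two statements are literally the same after unwrapping the notation.

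For (3) $\Rightarrow$ (1), given any $\tau$-factorization $a = \lambda a_1 \cdots a_n$ with $\lambda \in U(R)$, I would note that $(a) = (\lambda a_1 \cdots a_n) = (a_1 \cdots a_n)$, so $a \sim a_1\cdots a_n$. Since $a_i \tau a_j$ for all $i\neq j$ by hypothesis, statement (3) applies and yields $a \sim a_i$ for some $i$.

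For (1) $\Rightarrow$ (2) under the assumption that $R$ is strongly associate, I would start with $(a) = (a_1)\cdots(a_n) = (a_1 \cdots a_n)$, which gives $a \sim a_1 \cdots a_n$. The strongly associate hypothesis upgrades this to $a \approx a_1 \cdots a_n$, so there exists $\lambda \in U(R)$ with $a = \lambda a_1 \cdots a_n$. Combined with the hypothesis $a_i \tau a_j$ for $i \neq j$, this is a $\tau$-factorization of $a$, so (1) provides some index $i$ with $a \sim a_i$, i.e. $(a) = (a_i)$, as required. There is no real obstacle here: the entire content of the proposition is the formal bookkeeping that distinguishes a bare product from a $\tau$-factorization (which insists on recording a unit), and the strongly associate hypothesis is precisely what is needed to produce that unit in the (1) $\Rightarrow$ (2) direction.
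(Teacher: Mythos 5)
Your proof is correct and follows essentially the same route as the paper: unwrap $(a_1)\cdots(a_n)=(a_1\cdots a_n)$ to identify (2) with (3), pass from a $\tau$-factorization to the associate relation for (3) $\Rightarrow$ (1), and use the strongly associate hypothesis to manufacture the unit $\lambda$ for (1) $\Rightarrow$ (2). Your write-up is in fact slightly more explicit than the paper's about where $a\sim a_1\cdots a_n$ gets upgraded to $a\approx a_1\cdots a_n$, which is the only nontrivial step.
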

\begin{proof}
(2) $\Leftrightarrow$ (3) are seen to be equivalent after noting $(a_1)(a_2)\cdots (a_n)=(a_1\cdot a_2 \cdots a_n)$.
\\
\indent (2) $\Rightarrow$ (1) If $a=\lambda a_1 \cdots a_n$ is a $\tau$-factorization, then $(a)=(a_1\cdots a_n) = (a_1)\cdots(a_n)$ with $a_i \tau a_j$ for all $i \neq j$, so by (2) we have $(a)=(a_i)$ for some $1\leq i \leq n$.
\\
\indent We now assume $R$ is strongly associate and show $(1) \Rightarrow (2)$. Let $(a)=(a_1) \cdots (a_n)$ with $a_i\tau a_j$ for $i \neq j$, then $a\sim a_1 \cdots a_n$ implies there exists a $\lambda \in U(R)$ with $a=\lambda a_1 \cdots a_n$ a $\tau$-factorization, so by (1) we have $a\sim a_i$ for some $i$.
\end{proof}

\indent We will call a non-unit $a \in R$ \emph{$\tau$-irreducible} or \emph{$\tau$-atomic} if it satisfies condition (1) of Proposition \ref{prop: irr}.
\begin{theorem} A strong associate of a $\tau$-irreducible element is $\tau$-irreducible.
\end{theorem}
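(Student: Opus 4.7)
The plan is to unpack definitions and push a $\tau$-factorization of the strong associate back to one of the original $\tau$-irreducible element. Suppose $a$ is $\tau$-irreducible and $b$ is a strong associate of $a$, i.e.\ $b = \mu a$ for some $\mu \in U(R)$. First I would note that $b$ is a non-unit (since multiplication by a unit does not change the ideal generated, $b \sim a$, and $a$ is a non-unit), so it makes sense to ask whether $b$ is $\tau$-irreducible.

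Next, take an arbitrary $\tau$-factorization $b = \lambda b_1 \cdots b_n$ with $\lambda \in U(R)$ and $b_i \tau b_j$ for $i \neq j$. Multiplying both sides by $\mu^{-1}$ gives
\[
a = (\mu^{-1}\lambda)\, b_1 \cdots b_n,
\]
and since $\mu^{-1}\lambda \in U(R)$ and the factors $b_i$ are unchanged (so the pairwise $\tau$-conditions are preserved), this is a genuine $\tau$-factorization of $a$. The $\tau$-irreducibility of $a$ then yields $a \sim b_i$ for some $i$. Finally, since $b \approx a$ implies $b \sim a$ by Lemma~\ref{lem: relations}(1), transitivity of $\sim$ (Lemma~\ref{lem: relations}(2)) gives $b \sim b_i$, which is exactly condition (1) of Proposition~\ref{prop: irr} for $b$.

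There is no real obstacle; the only thing to watch is to confirm that rescaling by the unit $\mu^{-1}$ truly produces a valid $\tau$-factorization in the sense of the paper's definition, which it does because the defining conditions are stated purely on the non-unit factors $b_i$ and on the leading coefficient being a unit.
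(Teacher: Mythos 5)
Your proposal is correct and follows essentially the same route as the paper's own proof: transfer the $\tau$-factorization of the strong associate to the original $\tau$-irreducible element by absorbing the unit, then use $a \approx b \Rightarrow a \sim b$ and transitivity of $\sim$. The extra check that the strong associate is a non-unit is a harmless bit of added care.
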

\begin{proof} Let $a$ be a $\tau$-irreducible element.  Suppose $a=\lambda a'$ for $\lambda \in U(R)$.  Let $a'= \mu b_1 \cdots b_n$ be a $\tau$-factorization.  Then $a=\lambda a'= (\lambda \mu) b_1 \cdots b_n$ is a $\tau$-factorization and $a$ is $\tau$-irreducible, so $a\sim b_i$ for some $1 \leq i \leq n$.  We have $a \approx a' \Rightarrow a\sim a'$, so $a' \sim a\sim b_i$ which shows $a'$ is $\tau$-irreducible.
\end{proof}
\begin{proposition}\label{prop: s irr}Let $R$ be a commutative ring and $\tau$ a relation on $R^{\#}$. For $a\in R$, a non-unit and $\lambda \in U(R)$, the following are equivalent.
\\(1) $a=\lambda a_1\cdots a_n$, $n\in \N$, with $a_i \tau a_j$ for $i \neq j$ implies $a \approx a_i$ for some $i$.
\\(2) $a\approx a_1\cdots a_n$, $n\in \N$, with $a_i \tau a_j$ for $i \neq j$ implies $a \approx a_i$ for some $i$.
\end{proposition}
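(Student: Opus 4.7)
The plan is to prove both implications by unwinding the definition of $\approx$ and observing that a $\tau$-factorization $a=\lambda a_1\cdots a_n$ is, up to rephrasing, exactly an identity $a\approx a_1\cdots a_n$ together with the pairwise relation $a_i\tau a_j$. This mirrors the argument for (2)$\Leftrightarrow$(3) in Proposition \ref{prop: irr}, except that the principal ideal equivalence $\sim$ is replaced by the unit-multiple equivalence $\approx$, and so a strong-associate hypothesis is not needed.

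For (1)$\Rightarrow$(2): Suppose $a\approx a_1\cdots a_n$ with $a_i\tau a_j$ for $i\neq j$. By definition of $\approx$, there exists $\lambda\in U(R)$ with $a=\lambda(a_1\cdots a_n)=\lambda a_1\cdots a_n$. Since each $a_i\tau a_j$ for $i\neq j$, this is a $\tau$-factorization of $a$, so (1) yields $a\approx a_i$ for some $i$.

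For (2)$\Rightarrow$(1): Suppose $a=\lambda a_1\cdots a_n$ is a $\tau$-factorization, with $\lambda\in U(R)$. Then by definition of $\approx$ we have $a\approx a_1\cdots a_n$, and the hypothesis $a_i\tau a_j$ for $i\neq j$ is built into the definition of $\tau$-factorization. Applying (2), we conclude $a\approx a_i$ for some $i$.

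There is no real obstacle here: the proposition is essentially definitional, amounting to the observation that rewriting $a=\lambda(a_1\cdots a_n)$ as $a\approx a_1\cdots a_n$ loses no information, because $\lambda$ is a unit and $\approx$ is precisely equivalence up to units. Unlike the analogous statement for $\sim$ (where passing from $a\sim a_1\cdots a_n$ back to a factorization $a=\lambda a_1\cdots a_n$ required the strong associate hypothesis), here both directions go through without any additional assumption on $R$.
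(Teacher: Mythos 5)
Your proof is correct and matches the paper's approach: the paper simply records that the equivalence is immediate from the definitions, and your two paragraphs are exactly that observation spelled out, since $a\approx a_1\cdots a_n$ means precisely $a=\lambda(a_1\cdots a_n)$ for some $\lambda\in U(R)$. Your closing remark correctly identifies why no strong-associate hypothesis is needed here, in contrast to the $\sim$ version in Proposition \ref{prop: irr}.
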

\begin{proof}
This is immediate from definitions.
\end{proof}
We will call a non-unit element $a\in R$ \emph{$\tau$-strongly irreducible} or \emph{$\tau$-strongly atomic} if $a$ satisfies one of the conditions of Proposition \ref{prop: s irr}.
\begin{theorem} A strong associate of a $\tau$-strongly irreducible element is $\tau$-strongly irreducible.
\end{theorem}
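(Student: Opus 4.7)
The plan is to mimic almost exactly the proof of the preceding theorem, replacing every instance of $\sim$ with $\approx$, and invoking Lemma \ref{lem: relations}(2) to get transitivity of $\approx$ in place of transitivity of $\sim$.

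In detail, I would start with a $\tau$-strongly irreducible element $a$ and a strong associate $a'$ of $a$, so that $a = \lambda a'$ for some $\lambda \in U(R)$. To verify that $a'$ satisfies condition (1) of Proposition \ref{prop: s irr}, I take an arbitrary $\tau$-factorization $a' = \mu b_1 \cdots b_n$. Multiplying through by $\lambda$ yields $a = (\lambda\mu) b_1 \cdots b_n$, which is still a $\tau$-factorization of $a$ because the factors $b_1,\dots,b_n$ (and hence the pairwise relations $b_i \tau b_j$) have not changed, and $\lambda\mu \in U(R)$.

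Since $a$ is $\tau$-strongly irreducible, condition (1) of Proposition \ref{prop: s irr} applied to this $\tau$-factorization produces an index $i$ with $a \approx b_i$. Combined with $a' \approx a$ and the fact that $\approx$ is an equivalence relation (Lemma \ref{lem: relations}(2)), transitivity gives $a' \approx b_i$. Thus $a'$ satisfies the defining condition of $\tau$-strong irreducibility.

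The argument is entirely routine; the only place where anything is actually needed is the transitivity of $\approx$, which is the single feature distinguishing this proof from the preceding theorem's use of transitivity of $\sim$. There is no real obstacle, and no hypotheses beyond what the preceding theorem used are required.
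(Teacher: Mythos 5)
Your proof is correct and follows essentially the same route as the paper: transfer the $\tau$-factorization of $a'$ to one of $a$, apply the irreducibility of $a$, and use transitivity of $\approx$ to conclude. The only cosmetic difference is that you verify condition (1) of Proposition \ref{prop: s irr} by multiplying through by the unit, whereas the paper verifies the equivalent condition (2) directly; both are fine.
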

\begin{proof}Let $a' \approx a$ with $a$ $\tau$-strongly irreducible.  Suppose $a'\approx a_1 \cdots a_n$, $n\in \N$, with $a_i\tau a_j$ for all $i \neq j$.  Then we have $a\approx a' \approx a_1 \cdots a_n$ which implies $a \approx a_i$ for some $1\leq i \leq n$.  Hence $a'\approx a \approx a_i$, showing $a'$ to be $\tau$-strongly irreducible.
\end{proof}

\begin{proposition} \label {prop: m irr} Let $R$ be a commutative ring and $\tau$ a relation on $R^{\#}$.  For a non-unit $a \in R$, $\lambda \in U(R)$ we consider the following statements.
\\(1) $(a)$ is maximal in the set $S':=\{(b) \mid b\in R$, a non-unit and $b\mid_{\tau} a \}$.
\\(2) $a=\lambda a_1 \cdots a_n$, a $\tau$-factorization implies $a \sim a_i$ for all $i$.
\\(3) $a=\lambda a_1 \cdots a_n$, a $\tau$-factorization implies $a \approx a_i$ for all $i$.
\\Then (3) $\Rightarrow$ (1) $\Leftrightarrow$ (2) and for $R$ strongly associate, $(2) \Rightarrow (3)$.
\end{proposition}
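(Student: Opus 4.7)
The plan is to prove the biconditional (1) $\Leftrightarrow$ (2) directly by unpacking the definitions of $\mid_{\tau}$ and of $S'$, then to obtain (3) $\Rightarrow$ (1) for free by observing that (3) $\Rightarrow$ (2) via Lemma~\ref{lem: relations}(1) (since $\approx$ implies $\sim$), and finally to note that when $R$ is strongly associate the conclusions of (2) automatically upgrade to those of (3) because $\sim$ coincides with $\approx$ on $R$.

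For (1) $\Rightarrow$ (2): given any $\tau$-factorization $a = \lambda a_1 \cdots a_n$, each $a_i$ lies in $R^{\#}$ and hence is a non-unit, and $a_i \mid_{\tau} a$ since $a_i$ appears as a $\tau$-factor in this very factorization. Therefore $(a_i) \in S'$. On the other hand $a = \lambda a_1 \cdots a_n \in (a_i)$, so $(a) \subseteq (a_i)$, and the maximality of $(a)$ in $S'$ forces $(a) = (a_i)$, i.e.\ $a \sim a_i$, for every $i$. For (2) $\Rightarrow$ (1): suppose $(a)$ were not maximal in $S'$, and pick $(b) \in S'$ with $(a) \subsetneq (b)$. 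By definition of $S'$, $b$ is a non-unit with $b \mid_{\tau} a$, so by definition of $\mid_{\tau}$ there is some $\tau$-factorization $a = \mu b_1 \cdots b_m$ in which $b = b_{i_0}$ for some index $i_0$. Applying (2) to this $\tau$-factorization yields $a \sim b_{i_0} = b$, contradicting $(a) \subsetneq (b)$.

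For (3) $\Rightarrow$ (1): by Lemma~\ref{lem: relations}(1), $a \approx a_i$ implies $a \sim a_i$, so (3) implies (2) termwise, and then (2) $\Rightarrow$ (1) from above. Finally, if $R$ is strongly associate then by definition $\sim$ implies $\approx$ on $R$, so if $a = \lambda a_1 \cdots a_n$ is a $\tau$-factorization and (2) gives $a \sim a_i$ for each $i$, we may upgrade each such conclusion to $a \approx a_i$, proving (2) $\Rightarrow$ (3).

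Nothing here is a genuine obstacle; the only point to watch is the correct parsing of $\mid_{\tau}$ — namely that $b \mid_{\tau} a$ asserts the existence of \emph{some} $\tau$-factorization of $a$ with $b$ as a factor, not that $b$ occurs in every $\tau$-factorization — and the convention that all $\tau$-factors lie in $R^{\#}$, so they automatically qualify as non-units in the definition of $S'$. With those points in mind, the argument is a direct translation between the order-theoretic statement (1) and the factorization statement (2), plus a single application of the implications among $\cong,\approx,\sim$ recorded in Lemma~\ref{lem: relations}.
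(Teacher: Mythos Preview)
Your proof is correct and follows essentially the same approach as the paper's: both prove (1) $\Leftrightarrow$ (2) by unpacking the definition of $\mid_\tau$ and using maximality, obtain (3) $\Rightarrow$ (2) from $\approx \Rightarrow \sim$, and use the strongly associate hypothesis to upgrade $\sim$ to $\approx$ for (2) $\Rightarrow$ (3). The only cosmetic difference is that you phrase (2) $\Rightarrow$ (1) as a proof by contradiction, while the paper shows directly that any $(b)\in S'$ with $(a)\subseteq(b)$ must equal $(a)$.
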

\begin{proof}
(1) $\Rightarrow$ (2) Let $a$ satisfy (1) and suppose $a=\lambda a_1 \cdots a_n$ is a $\tau$-factorization.  Then $(a)\subseteq (a_i)$, each $a_i\mid_\tau a$, so we have $(a_i)\in S'$ for all $i$.  Hence by maximality of $(a)$ in $S'$, we have $(a)=(a_i)$ as desired.  (2) $\Rightarrow$ (1) Suppose $a$ satisfies (2), and we have $(a) \subseteq (b) \in S'$.  We have $b\mid_{\tau}a$.  Say $a=\lambda ba_1 \cdots a_n$ is a $\tau$-factorization.  By (2) we have $a\sim b$, thus proving $(a)$ is maximal in $S'$ as desired.
\\
\indent (3) $\Rightarrow$ (2) Clear. Furthermore, given $R$  strongly associate it is clear that the converse will also hold since $a\sim a_i \Rightarrow a\approx a_i$.
\end{proof}
We say a non-unit element $a \in R$ is \emph{$\tau$-m-irreducible} or \emph{$\tau$-m-atomic} if $a$ satisfies conditions (1) or (2) of Proposition \ref{prop: m irr}.
\begin{theorem}
A strong associate of a $\tau$-m-irreducible element is $\tau$-m-irreducible.
\end{theorem}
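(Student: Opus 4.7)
The plan is to mimic the proofs of the two preceding theorems (that a strong associate of a $\tau$-irreducible, respectively $\tau$-strongly irreducible, element inherits the property), using characterization (2) of Proposition \ref{prop: m irr} as the working definition of $\tau$-m-irreducibility. The only wrinkle is that condition (2) demands $a \sim a_i$ for \emph{every} index $i$, rather than for some index, but this quantifier change does not affect the strategy.

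First I would set up notation: let $a$ be $\tau$-m-irreducible and suppose $a' \approx a$, so $a' = \mu a$ for some $\mu \in U(R)$. To show $a'$ is $\tau$-m-irreducible, I would start from an arbitrary $\tau$-factorization $a' = \nu b_1 \cdots b_n$ with $\nu \in U(R)$ and aim to conclude $a' \sim b_i$ for every $i$.

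The key step is to transport the factorization back to $a$. Writing $a = \mu^{-1} a' = (\mu^{-1}\nu) b_1 \cdots b_n$, I note the $b_i$'s and all the pairwise $\tau$ relations are unchanged, so this is genuinely a $\tau$-factorization of $a$. Applying condition (2) of Proposition \ref{prop: m irr} to $a$ yields $a \sim b_i$ for all $i$. Then by Lemma \ref{lem: relations}(1), $a' \approx a$ forces $a' \sim a$, and transitivity of $\sim$ (Lemma \ref{lem: relations}(2)) delivers $a' \sim b_i$ for all $i$, completing the proof.

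There is no real obstacle: the argument is a routine unit shuffle, and the associated preservation of $\tau$-relations is automatic since the $b_i$'s themselves do not change. The main thing to be careful about is simply to state that the quantifier is ``for all $i$'' throughout, so that one cites part (2) of Proposition \ref{prop: m irr} rather than confusing it with the earlier ``for some $i$'' irreducibility notions.
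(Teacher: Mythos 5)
Your proof is correct and is essentially the same as the paper's: both transfer the $\tau$-factorization of $a'$ to one of $a$ by absorbing the unit $\mu$, apply condition (2) of Proposition \ref{prop: m irr} to get $a \sim b_i$ for all $i$, and conclude via $a' \sim a$ and transitivity. Your explicit remark about the ``for all $i$'' quantifier is a nice touch but does not change the argument.
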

\begin{proof}
Let $a$ be a $\tau$-m-irreducible element.  Suppose $a'\approx a$.  Say there is a unit $\mu$ in $R$ with $a=\mu a'$.  We suppose $a'=\lambda a_1 \cdots a_n$, $n\in \N$, with $a_i\tau a_j$ for all $i \neq j$.  Then $a=\mu a'=(\mu \lambda) a_1 \cdots a_n$ remains a $\tau$-factorization.  So by (2), $a \sim a_i$ for all $1\leq i \leq n$.  But then we have $a' \sim a \sim a_i$ for all non-units $a_i\in R$, showing $a$ is $\tau$-m-irreducible as desired.
\end{proof}
\begin{proposition}\label{prop: vs irr}
Let $R$ be a commutative ring and $\tau$ a relation on $R^{\#}$.  For a non-unit $a \in R$, $\lambda \in U(R)$, with $a\cong a$, the following are equivalent.
\\(1) $a=\lambda a_1 \cdots a_n$, $n\in \N$, with $a_i \tau a_j$ for all $i\neq j$ implies $a\cong a_i$ for some $i$.
\\(2) $a\cong a_1 \cdots a_n$, $n\in \N$, with $a_i \tau a_j$ for all $i\neq j$ implies $a\cong a_i$ for some $i$.
\\(3) $a \sim a_1 \cdots a_n$, $n\in \N$, with $a_i \tau a_j$ for $i \neq j$ implies $a \sim a_i$ for some $i$ .
\\(4) $a$ has no non-trivial $\tau$-factorizations.
\end{proposition}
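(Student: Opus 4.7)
The hypothesis $a \cong a$ is what powers the equivalence: by Lemma~\ref{lem: relations}(4) it guarantees both that $a \sim c \Rightarrow a \cong c$ for every $c \in R$, and that $a \cong c \Leftrightarrow a \cong \mu c$ for every unit $\mu$. Thus whenever one side is $a$, the relations $\sim$, $\approx$, and $\cong$ are interchangeable for our purposes. The plan is to close the cycle $(4) \Rightarrow (3) \Rightarrow (2) \Rightarrow (1) \Rightarrow (4)$.

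For $(4) \Rightarrow (3)$, assume $a$ admits only the trivial $\tau$-factorization and that $a \sim a_1 \cdots a_n$ with $a_i \tau a_j$ for $i \neq j$. The similarity promotes to $a \cong a_1 \cdots a_n$ via the hypothesis, so any expression $a = r\, a_1 \cdots a_n$ forces $r \in U(R)$; such an $r$ exists by associate equivalence, yielding $a = \lambda a_1 \cdots a_n$ with $\lambda$ a unit. This is a $\tau$-factorization, so (4) forces it to be trivial and hence $n = 1$, giving $a \sim a_1$.

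For $(3) \Rightarrow (2)$: if $a \cong a_1 \cdots a_n$ with $a_i \tau a_j$, then $a \sim a_1 \cdots a_n$ by Lemma~\ref{lem: relations}(1), so (3) yields $a \sim a_i$ for some $i$, which promotes back to $a \cong a_i$ by the hypothesis. For $(2) \Rightarrow (1)$: given a $\tau$-factorization $a = \lambda a_1 \cdots a_n$, one has $a \approx a_1 \cdots a_n$, hence $a \sim a_1 \cdots a_n$, hence $a \cong a_1 \cdots a_n$, so (2) supplies $a \cong a_i$.

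The only substantial direction is $(1) \Rightarrow (4)$, and this is where the shape of $\cong$ is essential. Given a $\tau$-factorization $a = \lambda a_1 \cdots a_n$, use (1) to pick $i$ with $a \cong a_i$. Rewriting $a = \bigl(\lambda \prod_{j\neq i} a_j\bigr)\, a_i$, the defining property of $\cong$ forces the coefficient $\lambda \prod_{j \neq i} a_j$ to lie in $U(R)$; as each $a_j \in R^{\#}$ is a non-unit, the product over $j \neq i$ must be empty, so $n = 1$ and the factorization was trivial. The corner case $a = 0$ is handled by the fact that $a \cong a_i$ with $a = 0$ forces $a_i = 0$, which contradicts the convention that $0$ cannot appear as a $\tau$-factor in a non-trivial factorization; hence $0$ admits only the trivial $\tau$-factorization $0 = \lambda \cdot 0$.
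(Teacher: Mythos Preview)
Your proof is correct and follows essentially the same approach as the paper. The paper runs the cycle $(1)\Rightarrow(2)\Rightarrow(3)\Rightarrow(1)$ and then treats $(1)\Leftrightarrow(4)$ separately, while you run a single cycle $(4)\Rightarrow(3)\Rightarrow(2)\Rightarrow(1)\Rightarrow(4)$; in the key step $(1)\Rightarrow(4)$ the paper substitutes $a_i=\mu a$ and invokes $a\cong a$ on the resulting expression, whereas you apply $a\cong a_i$ directly to the coefficient $\lambda\prod_{j\neq i}a_j$---both arguments exploit the same mechanism. One tiny imprecision: in $(4)\Rightarrow(3)$ your claim that ``any expression $a=r\,a_1\cdots a_n$ forces $r\in U(R)$'' is literally false when $a=0$, but the conclusion survives since $a\cong a_1\cdots a_n$ still gives $a\approx a_1\cdots a_n$ (Lemma~\ref{lem: relations}(1)), so a unit $\lambda$ exists regardless.
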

\begin{proof}
\indent (1) $\Rightarrow$ (2) Suppose $a\cong a_1 \cdots a_n$ with $a_i \tau a_j$ for all $i\neq j$.  We have $a=\lambda a_1 \cdots a_n$ for some $\lambda \in U(R)$.  Thus by (1) $a\cong a_i$ for some $i$.  (2) $\Rightarrow$ (3)  Suppose $a\sim a_1 \cdots a_n$ with $a_i \tau a_j$ for all $i\neq j$. We have $a\cong a$, so $a\cong a_1 \cdots a_n$.  By (2) $a\cong a_i$ for some $i$.  (3) $\Rightarrow$ (1) Suppose $a= \lambda a_1 \cdots a_n$ with $a_i \tau a_j$ for all $i\neq j$.  Then $a\sim a_1 \cdots a_n$.  By (3) we have $a\sim a_i$ for some $i$.  Thus we have $a\cong a_i$ for some $i$, proving the equivalence of (1)-(3).
\\
\indent (1) $\Rightarrow$ (4) Suppose $a=\lambda a_1 \cdots a_n$. By assumption $a \cong a_i$ for some $i$, say $a_i= \mu a$ for $\mu \in U(R)$.  This factorization can be written as $a=\lambda a_1 \cdots \hat{a_i} \cdots a_n \cdot (\mu a)$.  But $a \cong a$, which means $a_1 \cdots \hat{a_i} \cdots a_n=\lambda'\in U(R)$, so $n=1$ and we have the trivial factorization $a=\lambda'(\mu a)$ after all.  (4) $\Rightarrow$ (1)  The only types of $\tau$-factorizations are the trivial ones, $a=\lambda (\lambda^{-1} a)$ and we have by assumption $a\cong a$, and by Lemma \ref{lem: relations}, $a\cong \lambda^{-1}a$.
\end{proof}
We shall call a non-unit $a\in R$ with $a\cong a$ \emph{$\tau$-very strongly irreducible} or \emph{$\tau$-very strongly atomic} if it satisfies one of the equivalent conditions (1)-(4) of Proposition \ref{prop: vs irr}.
\begin{theorem} A strong associate of a $\tau$-very strongly irreducible element is $\tau$-very strongly irreducible. \end{theorem}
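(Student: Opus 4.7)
The plan is to use the characterization (4) from Proposition \ref{prop: vs irr}: an element is $\tau$-very strongly irreducible if and only if it satisfies $a\cong a$ and has no non-trivial $\tau$-factorizations. Thus if $a' \approx a$, say $a' = \mu a$ for some $\mu \in U(R)$, I need to verify two things: that $a' \cong a'$, and that $a'$ has no non-trivial $\tau$-factorizations.

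For the first point, since $a$ is $\tau$-very strongly irreducible we have $a\cong a$, and by Lemma \ref{lem: relations}(4) this is equivalent to $a = 0$ or $\mathrm{ann}(a) \subseteq J(R)$. Multiplication by the unit $\mu$ is a bijection on $R$, so $\mathrm{ann}(a') = \mathrm{ann}(\mu a) = \mathrm{ann}(a)$, and likewise $a' = 0 \Leftrightarrow a = 0$. Applying Lemma \ref{lem: relations}(4) again in the reverse direction yields $a' \cong a'$.

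For the second point, suppose toward a contradiction that $a' = \lambda b_1 \cdots b_n$ is a non-trivial $\tau$-factorization (so $n \geq 2$, each $b_i \in R^{\#}$, and $b_i \tau b_j$ for $i \neq j$). Then multiplying by $\mu^{-1}$ gives
$$a = \mu^{-1}a' = (\mu^{-1}\lambda)\, b_1 \cdots b_n,$$
which is again a $\tau$-factorization of $a$ with the same factors $b_i$ (the relations $b_i\tau b_j$ are unchanged). This is a non-trivial $\tau$-factorization of $a$, contradicting that $a$ is $\tau$-very strongly irreducible. Hence every $\tau$-factorization of $a'$ is trivial, and $a'$ satisfies condition (4) of Proposition \ref{prop: vs irr}, completing the argument.

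There is no real obstacle here: the point is simply that rescaling by a unit is invertible, so $\tau$-factorizations of $a$ and $a'$ are in bijection, and the annihilator-based criterion for $\cong$-reflexivity transfers across $\approx$. The only thing to be careful about is invoking characterization (4) rather than (1)--(3), since the latter quantify over $\tau$-factorizations and it is cleaner to state the transfer of factorizations once via (4).
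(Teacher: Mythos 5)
Your proof is correct and takes essentially the same route as the paper: the key step in both is to absorb the unit $\mu$ into the leading coefficient so that $\tau$-factorizations of $a'$ become $\tau$-factorizations of $a$ with the same factors. The only differences are cosmetic --- you invoke characterization (4) of Proposition \ref{prop: vs irr} where the paper uses (1), and you make the transfer of $\cong$-reflexivity explicit via the annihilator criterion of Lemma \ref{lem: relations}(4), a point the paper asserts more briefly.
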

\begin{proof}
Let $a$ be $\tau$-very strongly irreducible.  Let $a\approx a'$, say $a=\mu a'$ for some $\mu \in U(R)$.  Then $a\cong a$ if and only if $a' \cong a'$ by Lemma \ref{lem: relations}.  We suppose $a'=\lambda a_1\cdots a_n$, $n\in \N$, with $a_i \tau a_j$ for all $i\neq j$ and $\lambda \in U(R)$.  So we have $a=\mu a'=(\mu\lambda) a_1\cdots a_n$ which remains a $\tau$-factorization.  Since $a$ is $\tau$-very strongly irreducible, we have $a \cong a_i$ for some $i$.  So $a'\cong a \cong a_i$ showing $a'$ is $\tau$-very strongly irreducible.
\end{proof}
\begin{theorem} Let $R$ be a commutative ring and $\tau$ a relation on $R^{\#}$.  Let $a \in R$ be a non-unit.
\\(1) $a$ is $\tau$-very strongly irreducible implies $a$ is $\tau$-m-irreducible.
\\(2) For $R$ strongly associate, $a$ is $\tau$-m-irreducible implies $a$ is $\tau$-strongly irreducible.
\\(3) $a$ $\tau$-strongly irreducible implies $a$ is $\tau$-irreducible.
\\(4) $a$ $\tau$-very strongly irreducible implies $a$ is $\tau$-strongly irreducible.
\\(5) $a$ $\tau$-m-irreducible implies $a$ is $\tau$-irreducible.
\\
The following diagram summarizes our result ($\dagger$ represents a strongly associate ring):
$$\xymatrix{
\tau\text{-very strongly irred.}\ar@{=>}[dr] \ar@{=>}[r]& \tau\text{-strongly irred.} \ar@{=>}[r]& \tau \text{-irred.}\\
 & \tau\text{-m-irred.}\ar@{=>}[u]_{\dagger}\ar@{=>}[ur]  & &}$$
\end{theorem}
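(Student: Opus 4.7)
The plan is to prove the five implications essentially by chasing the three equivalence-like relations $\sim$, $\approx$, $\cong$ through the four definitions, using heavily that $\cong \Rightarrow \approx \Rightarrow \sim$ (Lemma \ref{lem: relations}(1)) and the multiple characterizations available in Propositions \ref{prop: irr}, \ref{prop: s irr}, \ref{prop: m irr}, and \ref{prop: vs irr}.

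First I would dispatch (3), (4), and (5), which are each immediate. For (3), given a $\tau$-factorization $a = \lambda a_1 \cdots a_n$ with $a_i \tau a_j$ for $i \neq j$, $\tau$-strong irreducibility yields $a \approx a_i$ for some $i$, and then $a \approx a_i \Rightarrow a \sim a_i$. For (4), the same setup with $\tau$-very strong irreducibility (plus $a \cong a$) gives $a \cong a_i$ for some $i$, hence $a \approx a_i$. For (5), $\tau$-m-irreducibility supplies condition (2) of Proposition \ref{prop: m irr}, namely $a \sim a_i$ for \emph{all} $i$, which in particular gives $a \sim a_i$ for some $i$, so $a$ is $\tau$-irreducible.

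Next I would handle (2). Assuming $R$ is strongly associate and $a$ is $\tau$-m-irreducible, take any $\tau$-factorization $a = \lambda a_1 \cdots a_n$. Proposition \ref{prop: m irr}(2) gives $a \sim a_i$ for all $i$; the strongly associate hypothesis upgrades every $\sim$ to $\approx$, so $a \approx a_i$ for some $i$, which is condition (1) of Proposition \ref{prop: s irr}.

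The one implication requiring a small trick is (1): $\tau$-very strongly irreducible $\Rightarrow$ $\tau$-m-irreducible. The natural route is to invoke condition (4) of Proposition \ref{prop: vs irr}, which says $a$ admits \emph{only} trivial $\tau$-factorizations. Then any $\tau$-factorization of $a$ is of the form $a = \lambda(\lambda^{-1} a)$, so there is a single factor $a_1 = \lambda^{-1}a$ satisfying $a \sim a_1$. In particular $a \sim a_i$ for all $i$ (vacuously for $i > 1$, and directly for $i = 1$), which is condition (2) of Proposition \ref{prop: m irr}. I expect this to be the only implication where one has to pause; the rest are relation-chasing. Finally I would note that the summary diagram is simply a pictorial restatement of (1)--(5), with the dagger marking the use of the strongly associate hypothesis in (2).
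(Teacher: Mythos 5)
Your proposal is correct and follows essentially the same route as the paper: (3)--(5) by chasing $\cong \Rightarrow \approx \Rightarrow \sim$, (2) by upgrading $\sim$ to $\approx$ via the strongly associate hypothesis, and (1) via the ``only trivial $\tau$-factorizations'' characterization from Proposition \ref{prop: vs irr}(4). The only cosmetic difference is that in (1) you verify condition (2) of Proposition \ref{prop: m irr} while the paper verifies the equivalent condition (1) (maximality of $(a)$ in $S'$), which is immaterial.
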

\begin{proof}
(1) Let $a$ be $\tau$-very strong irreducible, and suppose $(a) \subseteq (a_i) \in S'$.  The only $\tau$-factorizations of $a$ are trivial ones.  We must have $a=\lambda (\lambda^{-1}a)=\lambda a_i$, that is $a \approx a_i$ and thus $(a)=(a_i)$, proving $a$ is $\tau$-m-irreducible.
\\
\indent(2) Let $R$ be a strongly associate ring, with $a$, a $\tau$-m-irreducible element.  We suppose $a=\lambda a_1 \cdots a_n$ is a $\tau$-factorization.  Then $a_i \mid_{\tau} a$ for each $i$.  But $a$ is $\tau$-m-irreducible, so we have $a \sim a_i$ and hence $R$ strongly associate implies $a \approx a_i$ as desired.
\\
\indent(3) Let $a$ be a $\tau$-strongly irreducible element.  Suppose $a=\lambda a_1 \cdots a_n$ is a $\tau$-factorization.  Since $a$ is $\tau$-strongly irreducible, $a\approx a_i \Rightarrow a \sim a_i$ for some $i$, showing $a$ is $\tau$-irreducible as desired.
\\
\indent The proofs of (4) and (5) are immediate from definitions.
\end{proof}
\begin{theorem} Let $R$ be a pr\'esimplifiable commutative ring and $\tau$ a relation on $R^{\#}$.  Then $\tau$-irreducible, $\tau$-strongly irreducible, $\tau$-m-irreducible and $\tau$-very strongly irreducible are equivalent.
\end{theorem}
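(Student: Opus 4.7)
The plan is to invoke Lemma \ref{lem: relations}(5), which for a pr\'esimplifiable ring simultaneously gives (i) $\cong$ is reflexive on all of $R$, so every non-unit $a$ automatically satisfies $a\cong a$, (ii) the three relations $\sim$, $\approx$, $\cong$ coincide on $R$, and (iii) $R$ is strongly associate. From the diagram proved in the preceding theorem, the strongly associate hypothesis gives the chain
\[
\tau\text{-very strongly irr.} \Rightarrow \tau\text{-m-irr.} \Rightarrow \tau\text{-strongly irr.} \Rightarrow \tau\text{-irr.},
\]
so the only thing left to do is close the loop, i.e.\ show $\tau$-irreducible implies $\tau$-very strongly irreducible.

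For this closing step I would argue directly from the definitions. Let $a\in R$ be a non-unit which is $\tau$-irreducible. Since $R$ is pr\'esimplifiable, Lemma \ref{lem: relations}(5)(c) gives $a\cong a$, so $a$ is eligible to be $\tau$-very strongly irreducible. To verify condition (1) of Proposition \ref{prop: vs irr}, suppose $a=\lambda a_1\cdots a_n$ is a $\tau$-factorization with $\lambda\in U(R)$. By $\tau$-irreducibility there is some $i$ with $a\sim a_i$, and because $\sim$ and $\cong$ coincide on $R$ by Lemma \ref{lem: relations}(5)(e), this upgrades to $a\cong a_i$. Hence $a$ satisfies Proposition \ref{prop: vs irr}(1), so $a$ is $\tau$-very strongly irreducible.

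There is no real obstacle here beyond bookkeeping: the content is entirely encoded in Lemma \ref{lem: relations}(5), which collapses the three associate relations to a single relation and supplies reflexivity of $\cong$. Everything else reduces to chasing through the equivalences in Propositions \ref{prop: irr}, \ref{prop: s irr}, \ref{prop: m irr}, and \ref{prop: vs irr}, where each definition of irreducibility differs only in whether one demands $a\sim a_i$, $a\approx a_i$, or $a\cong a_i$ after a $\tau$-factorization. Once the three relations are identified, the four notions of $\tau$-irreducibility become literally the same statement.
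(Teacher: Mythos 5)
Your proposal is correct and follows essentially the same route as the paper: the paper's proof likewise relies on the earlier implication chain (valid since pr\'esimplifiable implies strongly associate) and closes the loop by showing $\tau$-irreducible implies $\tau$-very strongly irreducible, using $a\cong a$ and the upgrade from $a\sim a_i$ to $a\cong a_i$. Your version merely makes explicit the bookkeeping that the paper leaves implicit.
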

\begin{proof}Let $a\in R$ be a non-unit with $a$ $\tau$-irreducible.  If $R$ is pr\'esimplifiable, then $a\cong a$ for all $a\in R$.  Let $a\cong a_1 \cdots a_n$ with $a_i \tau a_j$ for all $i \neq j$, then $a=\lambda a_1 \cdots a_n$ for some $\lambda \in U(R)$ is a $\tau$-factorization of $a$.  Because $a$ is $\tau$-irreducible, we know $a\sim a_i$ for some $i$.  Therefore $a\cong a_i$ for some $i$, proving $a$ is $\tau$-very strongly irreducible as desired.
\end{proof}
When $R$ is a domain, all the types of irreducibles coincide and for non-zero elements, our definitions match the $\tau$-irreducible elements defined in \cite{Frazier}.  Furthermore, when we set $\tau=R^{\#} \times R^{\#}$, we get the usual factorization in integral domains for non-zero elements.  In domains, $0$ has no non-trivial factorizations anyway, so this is not much of an impediment.

\section{$\tau$-Finite Factorization Conditions on Rings with Zero-Divisors}
Let $\alpha \in \{$atomic, strongly atomic, m-atomic, very strongly atomic$ \}$, $\beta \in \{$associate, strong associate, very strong associate$\}$ and $\tau$ a symmetric relation on $R^{\#}$.  Then $R$ is said to be \emph{$\tau$-$\alpha$} if every non-unit $a\in R$ has a $\tau$-factorization $a=\lambda a_1\cdots a_n$ with $a_i$ being $\tau$-$\alpha$ for all $1\leq i \leq n$.  We will call such a factorization a \emph{$\tau$-$\alpha$-factorization}.  We say $R$ satisfies \emph{$\tau$-ACCP} if for every chain $(a_0) \subseteq (a_1) \subseteq \cdots \subseteq (a_i) \subseteq \cdots$ with $a_{i+1} \mid_{\tau} a_i$, there exists an $N\in \N$ such that $(a_i)=(a_N)$ for all $i>N$.
\\
\indent A ring $R$ is said to be a \emph{$\tau$-$\alpha$-$\beta$-UFR} if (1) $R$ is $\tau$-$\alpha$ and (2) for every non-unit $a \in R$ any two $\tau$-$\alpha$ factorizations $a=\lambda_1 a_1 \cdots a_n = \lambda_2 b_1 \cdots b_m$ have $m=n$ and there is a rearrangement so that $a_i$ and $b_i$ are $\beta$.  A ring $R$ is said to be a \emph{$\tau$-$\alpha$-HFR} if (1) $R$ is $\tau$-$\alpha$ and (2) for every non-unit $a \in R$ any two $\tau$-$\alpha$-factorizations have the same length.  A ring $R$ is said to be a \emph{$\tau$-BFR} if for every non-unit $a \in R$, there exists a natural number $N(a)$ such that for any $\tau$-factorization $a=\lambda a_1 \cdots a_n$, $n \leq N(a)$. A ring $R$ is said to be a \emph{$\tau$-$\beta$-FFR} if for every non-unit $a \in R$ there are only a finite number of non-trivial $\tau$-factorizations up to rearrangement and $\beta$.  A ring $R$ is said to be a \emph{$\tau$-$\beta$-WFFR} if for every non-unit $a \in R$, there are only finitely many $b\in R$ such that $b$ is a non-trivial $\tau$-divisor of $a$ up to $\beta$.  A ring $R$ is said to be a \emph{$\tau$-$\alpha$-$\beta$-divisor finite (df)} if for every non-unit $a \in R$, there are only finitely many $\tau$-$\alpha$ $\tau$-divisors of $a$ up to $\beta$.

\begin{theorem} \label{thm: ff props} Let $R$ be a commutative ring and $\tau$ a relation on $R^{\#}$.  Let $\alpha \in \{$atomic, strongly atomic, m-atomic, very strongly atomic$ \}$, $\beta \in \{$associate, strong associate, very strong associate$\}$ and $\tau$ a symmetric relation on $R^{\#}$. We have the following.
\\(1) $R$ is a $\tau$-$\alpha$-$\beta$-UFR implies $R$ is a $\tau$-$\alpha$-HFR.
\\(2) For $\tau$ refinable and associate preserving $R$ is a $\tau$-$\alpha$-HFR implies $R$ is a $\tau$-BFR.
\\(3) For $\tau$ refinable and associate preserving, $R$ is a $\tau$-$\alpha$-$\beta$-UFR implies $R$ is a $\tau$-$\beta$-FFR .
\\(4) $R$ is a $\tau$-$\beta$-FFR implies $R$ is a $\tau$-BFR.
\\(5) $R$ is a $\tau$-$\beta$-FFR implies $R$ is a $\tau$-$\beta$-WFFR and $R$ is a $\tau$-$\beta$-WFFR implies $R$ is a $\tau$-$\alpha$-$\beta$-df ring.
\\(6) For $\tau$-refinable and associate preserving, $R$ is a $\tau$-$\alpha$-WFFR implies $R$ is a $\tau$-$\alpha$ $\tau$-$\alpha$-$\beta$-df ring.
\\(7) $R$ is a $\tau$-$\alpha$ $\tau$-$\alpha$-$\beta$-df ring implies $R$ is a $\tau$-$\alpha$-$\beta$-df ring.
\\(8) For $\tau$ refinable and associate preserving, $R$ is a $\tau$-BFR implies $R$ satisfies $\tau$-ACCP.
\\(9) For $\tau$ refinable and associate preserving, $R$ satisfies $\tau$-ACCP implies $R$ is $\tau$-$\alpha$.
\\(10) $R$ satisfying ACCP implies $R$ satisfies $\tau$-ACCP.
\\We have the following diagram ($\star$ represents $\tau$ being refinable and associate preserving).
$$\xymatrix{
            &        \tau\text{-}\alpha \text{-HFR} \ar@{=>}^{\star}[dr]     &             &                  &                 \\
\tau\text{-}\alpha\text{-} \beta \text{-UFR} \ar@{=>}[ur] \ar@{=>}^{\star}[r]  & \tau\text{-}\beta \text{-FFR} \ar@{=>}[r] \ar@{=>}[d]  & \tau\text{-BFR} \ar@{=>}[r]^{\star}& \tau\text{-ACCP} \ar@{=>}^{\star}[r]& \tau\text{-}\alpha\\
            & \tau\text{-}\beta\text{-WFFR} \ar@{=>}[d] \ar@{=>}[dl]_{\star}                    &              &  \text{ACCP} \ar@{=>}[u]               &                  \\
\tau\text{-}\alpha\  \tau\text{-}\alpha\text{-}\beta \text{-df ring} \ar@{=>}[r]           & \tau\text{-}\alpha\text{-}\beta \text{-df ring} &
            }$$
\end{theorem}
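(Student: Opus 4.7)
The plan is to verify the ten implications in the order they are stated, grouping them by the main technique used: unpacking definitions for (1), (4), (5), (7), (10); a common \emph{refinement} argument that replaces arbitrary $\tau$-factorizations by $\tau$-$\alpha$-refinements for (2), (3), (6); and standard chain arguments for (8), (9). Throughout, refinability (granted under the $\star$ hypothesis) is the workhorse that lets one pass between an arbitrary $\tau$-factorization and a $\tau$-$\alpha$-one.

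For the easy implications, (1) is immediate since $\beta$-matching of the factorizations forces equal length. For (4), the finitely many $\tau$-factorizations up to rearrangement and $\beta$ have only finitely many possible lengths, so $N(a)$ can be taken as the maximum. For (5), every non-trivial $\tau$-divisor of $a$ must appear as a factor in some non-trivial $\tau$-factorization of $a$; finitely many factorizations each contribute finitely many factors, and $\tau$-$\alpha$ divisors form a subset of all $\tau$-divisors. Part (7) is just forgetting the $\tau$-$\alpha$-atomicity assumption on $R$. Part (10) observes that any $\tau$-factorization is in particular an ordinary factorization, so $a_{i+1}\mid_\tau a_i$ implies $a_{i+1}\mid a_i$, and any $\tau$-ACCP chain is an ordinary ACCP chain and terminates.

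For (2), (3), (6) the plan is to take an arbitrary $\tau$-factorization $a = \lambda b_1 \cdots b_k$, $\tau$-$\alpha$-factor each $b_i = \mu_i c_{i,1}\cdots c_{i,m_i}$ (using that $R$ is $\tau$-$\alpha$), and invoke refinability to conclude that the concatenation is a bona fide $\tau$-$\alpha$-factorization of $a$ of length $\sum m_i \geq k$. In (2), $\tau$-$\alpha$-HFR forces this length to equal a fixed $N(a)$, bounding $k$. In (3), $\tau$-$\alpha$-$\beta$-UFR makes the refined factorization essentially unique up to rearrangement and $\beta$, so every $\tau$-factorization of $a$ corresponds to a partition of its (bounded length) $\tau$-$\alpha$-refinement into contiguous blocks whose products recover the $b_i$ up to $\beta$; only finitely many such partitions exist, giving $\tau$-$\beta$-FFR. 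For (6), reading the hypothesis as $\tau$-$\beta$-WFFR in accordance with the diagram, one first shows atomicity by observing that a non-terminating refinement would produce infinitely many $\tau$-divisors of $a$ (contradicting WFFR), so every $b\mid_\tau a$ eventually refines into $\tau$-$\alpha$-factors; the $\tau$-$\alpha$-$\beta$-df half is immediate since $\tau$-$\alpha$ divisors form a subset of the $\tau$-divisors already known to be finite.

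For (8), suppose toward contradiction a properly ascending chain $(a_0)\subsetneq(a_1)\subsetneq\cdots$ with $a_{i+1}\mid_\tau a_i$. Each step gives a nontrivial $\tau$-factorization $a_i = \lambda_i a_{i+1}d_{i,1}\cdots d_{i,k_i}$ with at least one ``extra'' factor (the associate-preserving hypothesis prevents collapse since $(a_i)\neq(a_{i+1})$); iteratively substituting and refining produces $\tau$-factorizations of $a_0$ of unbounded length, violating $\tau$-BFR. For (9), run the standard recursion: given a non-unit $a_0$, either $a_0$ is $\tau$-m-atomic, or $(a_0)$ is not maximal in the set of ideals $(b)$ with $b\mid_\tau a_0$, producing $(a_1)\supsetneq(a_0)$ with $a_1\mid_\tau a_0$; $\tau$-ACCP terminates this recursion at a $\tau$-m-atomic divisor, and a further recursion inside each cofactor builds the desired $\tau$-$\alpha$-factorization. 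The other flavors of $\alpha$ then follow from the implications among the four notions of $\tau$-irreducibility established in Section \ref{sec: types of tau-irreducibles}.

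The main obstacle is bookkeeping rather than depth: in (3) one must carefully verify that every $\tau$-factorization actually arises by coarsening the essentially unique $\tau$-$\alpha$-refinement (this is where associate-preservation is needed to match factors up to $\beta$), and in (9) one must pick the flavor of $\alpha$ to which ACCP directly applies (m-atomic being the natural target because it is defined ideal-theoretically) and then leverage the earlier implications to cover the remaining notions.
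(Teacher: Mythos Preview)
Your plan tracks the paper's proof closely for all ten parts. The easy implications (1), (4), (5), (7), (10) are handled exactly as the paper does; the refinement arguments in (2) and (3) match; and your chain argument for (8) is the same one the paper uses. For (6) the paper actually routes through $\tau$-ACCP and then invokes (9) to obtain atomicity, rather than arguing it directly as you suggest, but the substance is identical (an infinite properly ascending $\tau$-divisor chain yields infinitely many pairwise non-associate $\tau$-divisors of $a_0$, contradicting WFFR). You also correctly spot that the hypothesis in (6) should be read as $\tau$-$\beta$-WFFR, consistent with the diagram.

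The one genuine gap is the last sentence of your treatment of (9). You propose to establish the $\tau$-m-atomic case first and then deduce the remaining three flavors of $\alpha$ from the implications among the irreducibility notions in Section~3. Those implications run the wrong way for this purpose: $\tau$-m-irreducible implies $\tau$-irreducible, so a $\tau$-m-atomic factorization is automatically a $\tau$-atomic one and the $\alpha=\text{atomic}$ case does follow; but $\tau$-m-irreducible does \emph{not} imply $\tau$-strongly irreducible except in strongly associate rings, and it never implies $\tau$-very strongly irreducible. Consequently, knowing that $R$ is $\tau$-m-atomic does not give $R$ $\tau$-strongly atomic or $\tau$-very strongly atomic. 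The paper avoids this by running the recursion generically in $\alpha$: if some factor fails to be $\tau$-$\alpha$, refine that factor and continue, producing a chain that violates $\tau$-ACCP. You should follow that route rather than appealing to inter-$\alpha$ implications that do not point in the direction you need.
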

\begin{proof} (1) Let $R$ be a $\tau$-$\alpha$-$\beta$-UFR.  Then we have $R$ is $\tau$-$\alpha$ and every $\tau$-$\alpha$-factorization of any non-unit $a\in R$ has the same length, so $R$ is a $\tau$-$\alpha$-HFR.
\\
\indent(2) Let $\tau$ be refinable and associate preserving, with $R$ a $\tau$-$\alpha$-HFR.  Let $a\in R$ be a non-unit, and $a=\lambda a_1 \cdots a_n$ be a $\tau$-$\alpha$-factorization of $a$.  Set $N(a)=n$.  Suppose there were a $\tau$-factorization of $a$, of length $m>n$, $a=\mu b_1 \cdots b_m$.  This can be $\tau$-refined into a $\tau$-$\alpha$-factorization since $R$ is $\tau$-$\alpha$ and $\tau$ is refinable and associate preserving.  This would lead to a strictly longer $\tau$-$\alpha$-factorization of $a$ contradicting the fact that $R$ is a $\tau$-$\alpha$-HFR.
\\
\indent(3) Let $R$ be a $\tau$-$\alpha$-$\beta$-UFR, with $\tau$ refinable and associate preserving.  Let $a\in R$ be a non-unit.  Say $a=\lambda a_1 \cdots a_n$ is the unique $\tau$-$\alpha$ factorization up to rearrangement and $\beta$.  For any $\tau$-factorization $a=\mu b_1 \cdots b_m$, take the unique $\tau$-$\alpha$-factorization of each $b_i$, say $b_i=\mu_ic_{i_1} \cdots c_{i_{mi}}$.  We may now refine our $\tau$-factorization to be

$$a=\mu (\mu_1c_{1_1} \cdots c_{1_{m1}})(\mu_2c_{2_1} \cdots c_{2_{m2}}) \cdots (\mu_mc_{m_1} \cdots c_{m_{mm}})$$
$$=(\mu\mu_1\mu_2 \cdots \mu_m) c_{1_1} \cdots c_{1_{m1}}c_{2_1} \cdots c_{2_{m2}} \cdots c_{m_1} \cdots c_{m_{mm}}$$

This is a $\tau$-$\alpha$-factorization of $a$, so there is a rearrangement such that $c_i$ and $a_i$ are $\beta$.  This means any $\tau$-factorization of $a$ is simply some grouping of $\beta$ of the $a_i$ in the original $\tau$-$\alpha$-factorization of $a$.  There are only $2^{n}$ possible ways to do this up to $\beta$, so $R$ is a $\tau$-$\beta$-FFR.
\\
\indent (4) Let $R$ be a $\tau$-$\beta$-FFR, with $a\in R$ a non-unit.  There are only finitely many $\tau$-factorizations of $a$ up to $\beta$.  Simply set $N(a)$ to the maximum length of any of these $\tau$-factorizations.
\\
\indent(5) Let $R$ be a $\tau$-$\alpha$-FFR with $a$ a non-unit $a\in R$.  We collect each of the $\tau$-factors in the finite number of $\tau$-factorizations up to $\beta$.  This is a complete list of non-trivial $\tau$-divisors of $a$ up to $\beta$.  Moreover, it is a finite union of finite sets, hence is finite.  This proves $R$ is a $\tau$-$\beta$-WFFR.  Every $\tau$-$\alpha$-divisor is certainly a $\tau$-divisor, so the second implication is immediate.
\\
\indent(6) Let $R$ be a $\tau$-$\beta$-WFFR with $\tau$ refinable and associate preserving.  We have just seen that $R$ is a $\tau$-$\alpha$-$\beta$-df ring, so we need only show $R$ is $\tau$-$\alpha$.  In light of (9), it suffices to show that $R$ satisfies $\tau$-ACCP.  We suppose for a moment there is an infinite ascending chain of properly contained principal ideals $(a_0) \subsetneq (a_1) \subsetneq \cdots$ with $a_{i+1}\mid_{\tau} a_i$.  Say $a_i=\lambda_i a_{i+1}b_{i_1}\cdots b_{i_{n_i}}$ for each $i$.  We must have $n_i\geq 1$ for all $i$ otherwise $(a_i)=(a_{i+1})$.  Using the fact that $\tau$ is refinable and associate preserving, we know that we have the following $\tau$-factorizations of $a$:
$$a_0=\lambda_0 a_{1}b_{0_1}\cdots b_{0_{n_0}}=\lambda_0 (\lambda_1 a_{2}b_{1_1}\cdots b_{1_{n1}})b_{01}\cdots b_{0n_0}=$$
$$(\lambda_0 \lambda_1 \lambda_2) a_{3}b_{2_1}\cdots b_{2_{ni}}b_{1_1}\cdots b_{1_{n1}}b_{0_1}\cdots b_{0_{n0}}=...$$
So in particular, for $i>0$, each $a_i$ is a $\tau$-divisor of $a_0$.  Furthermore, none are even associate, so certainly none are $\beta$. Hence $a_0$ has an infinite number of $\tau$-divisors up to $\beta$.  This contradicts the hypothesis that $R$ is a $\tau$-$\alpha$-WFFR.
\\
\indent(7) This is immediate from definitions.
\\
\indent (8) Let $\tau$ be refinable and associate preserving and $R$ a $\tau$-BFR.  Suppose $(a_0)\subsetneq (a_1) \subsetneq \cdots \subsetneq (a_i) \subsetneq \cdots$ is an infinite chain of properly ascending principal ideals such that $a_{i+1} \mid_{\tau} a_i$ for each $i$.  Then we use the same factorization as in (6) to see that we get arbitrarily long $\tau$-factorizations of $a_0$ contradicting the hypothesis.
\\
\indent(9) Suppose $R$ satisfies $\tau$-ACCP, and $\tau$ is refinable and associate preserving.  Let $a\in R$ be a non-unit.  We show $a$ has a $\tau$-$\alpha$ factorization.  If $a$ is $\tau$-$\alpha$, we are done, so we may assume $a=\lambda_1 a_{1_1} \cdots a_{1_{n_1}}$ is a non-trivial $\tau$-factorization with $a$ and $a_{1_i}$ not $\beta$ for all $1 \leq i \leq n_1$.  If all of the $a_{i_1}$ are $\tau$-$\alpha$, we are done as we have found a $\tau$-$\alpha$ factorization of $a$.  So at least one must not be $\tau$-$\alpha$, say it is $a_{1_1}$, so suppose $a_{1_1}=\lambda_2 a_{2_1} \cdots a_{2_{n_2}}$ is a non-trivial $\tau$-factorization with $a_{1_1}$ and $a_{2_i}$ not $\beta$ for all $1 \leq i \leq n_2$.  Then we have $a=(\lambda_1 \lambda_2) a_{2_1} \cdots a_{2_{n_2}}a_{1_1} \cdots a_{1_{n_1}}$ is a $\tau$-factorization.  We could continue in the fashion picking out one factor that is not $\tau$-$\alpha$, always just saying it is $a_{i_1}$ after reordering if neces
 sary.  This yields an infinite chain of principal ideals $(a) \subsetneq (a_{1_1}) \subsetneq (a_{2_1}) \subsetneq \cdots$ with $a_{{i+1}_1} \mid_{\tau} a_{{i}_1}$ which contradicts $R$ satisfying $\tau$-ACCP.
\\
\indent(10) This is clear by noting that if $a\mid_{\tau} b$, then $a\mid b$.  If $R$ failed to satisfy $\tau$-ACCP, there would be a properly ascending infinite chain of principal ideals $(a_0)\subsetneq (a_1) \subsetneq \cdots$ with $a_{i+1}\mid_{\tau} a_i$ also satisfies $a_{i+1}\mid a_i$.  Hence we would have an infinite chain of properly ascending principal ideals which contradicts ACCP.
\end{proof}

\section{The relation $a \tau_z b \Leftrightarrow ab=0$}
\label{subsec: tau-z}
\indent Let $a,b \in R^{\#}$.  We will consider the relation $\tau_z$ defined by $a\tau_z b$ if and only if $ab=0$.  We will analyze the relation $\tau_z$ and investigate rings satisfying the $\tau_z$-finite factorization properties described in Section 4.
\\
\indent We observe that with the exception of nilpotent elements, we have a strong correspondence between $\tau_z$-factorizations and the zero-divisor graphs studied first by Beck in \cite{Beck} and then by several more authors in particular in \cite{Andersonzdg, Davidanderson, Livingston}.  The zero-divisor graph, denoted $\Gamma(R)$, is defined to be the graph with vertex set $Z(R)-\{0\}$.  Edges given by the relationship $a,b\in Z(R)-\{0\}$ are adjacent if $ab=0$.  So we see $a\tau_z b\Leftrightarrow ab=0 \Leftrightarrow $ $a$ and $b$ are adjacent in $\Gamma(R)$ or $a=b$ with $a^2=0$.  We would like to be able to say $a \tau_z b$ if and only if $a$ and $b$ are adjacent in $\Gamma(R)$.
\\
\indent There are two approaches to ensuring this can be said: (1) insist that our ring $R$ is reduced so there are no non-trivial nilpotent elements or (2) define a modification of $\tau_z$ to be $\tau_z^{\Delta}:=\tau_z - \Delta \cap \left(\text{Nil}(R)\times \text{Nil}(R)\right)$, that is $a \tau_z^{\Delta} b \Leftrightarrow ab=0 \text{ and } a\neq b$.  Both of these choices result in having no repeated factors in any given $\tau_z^{\Delta}$ ($\tau_z$)-factorization (in a reduced ring) which will be useful in several of the proofs.
\begin{theorem}\label{thm: tau props}Let $R$ be a commutative ring and $\tau_z$ and $\tau_z^{\Delta}$ be as defined above.
\\(1) For $a\in R^{\#}$, $a$ has only trivial $\tau_z^{\Delta}$($\tau_z$)-factorizations and therefore is a $\tau_z^{\Delta}$($\tau_z$)-atom.
\\(2) $\tau_z^{\Delta}$($\tau_z$) is symmetric, but not combinable, and therefore not multiplicative.  Furthermore, $\tau_z^{\Delta}$ ($\tau_z$) is refinable, but is not divisive.
\\(3) $R$ satisfies $\tau_z^{\Delta}$$(\tau_z)$-ACCP.
\\(4) $R$ is $\tau_z^{\Delta}$($\tau_z$)-atomic.
\\(5) If $R$ is an integral domain, then $R$ is a $\tau_z^{\Delta}$($\tau_z$)-atomic-associate-UFR.
\\(6) $\tau_z$ is associate (resp. strongly associate, resp. very strongly associate) preserving, while $\tau_z^{\Delta}$ is not.
\end{theorem}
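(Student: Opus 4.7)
The plan is to prove (1) first as it is the structural linchpin, then deploy it repeatedly for (2)--(6), handling $\tau_z$ and $\tau_z^{\Delta}$ together whenever the arguments coincide.

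For (1), observe that any $\tau_z$-factorization $a = \lambda a_1 \cdots a_n$ with $n \geq 2$ has $a_1 \tau_z a_2$, hence $a_1 a_2 = 0$, forcing $a = 0$, a contradiction for $a \in R^{\#}$. Thus every $a \in R^{\#}$ admits only the trivial factorization and is a $\tau_z$-atom by Proposition \ref{prop: irr}. The identical argument works for $\tau_z^{\Delta}$ since $\tau_z^{\Delta} \subseteq \tau_z$.

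For (2), symmetry is immediate from $ab = ba$. For non-combinability I would exhibit a ring with a $\tau_z$-factorization of length at least three, say $R = \Z_2 \times \Z_2 \times \Z_2$ and $0 = 1 \cdot e_1 \cdot e_2 \cdot e_3$ for the standard idempotents (also a $\tau_z^{\Delta}$-factorization since these are distinct); combining $e_1$ and $e_2$ yields $0 = 1 \cdot 0 \cdot e_3$ with $0$ appearing as a factor in a non-trivial factorization, which the paper forbids. Non-multiplicativity then follows from the preceding theorem (multiplicative $\Rightarrow$ combinable). Refinability is immediate from (1): each $a_i \in R^{\#}$ admits only trivial unit-rescaling refinements, which preserve the factorization condition. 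For non-divisibility, $R = \Z_{12}$ has $2 \cdot 6 = 0$ and $3 \mid 6$ but $2 \cdot 3 = 6 \neq 0$; this serves for both $\tau_z$ and $\tau_z^{\Delta}$.

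For (3), in an ascending chain $(a_0) \subseteq (a_1) \subseteq \cdots$ with $a_{i+1} \mid_{\tau_z} a_i$, if $a_i \neq 0$ then (1) forces the only $\tau_z$-factorization to be trivial, so $a_{i+1}$ is a unit multiple of $a_i$ and $(a_i) = (a_{i+1})$; the edge case $a_0 = 0$ collapses to this after step $1$ because only nonzero factors can appear in a non-trivial factorization of $0$. For (4), every $a \in R^{\#}$ is trivially its own length-one atomic factorization, and $a = 0$ either admits a non-trivial factorization $0 = 1 \cdot b \cdot c$ (with $b,c$ atoms by (1)) arising from some pair $bc = 0$, or only the trivial $0 = \lambda \cdot 0$, in which case $0$ itself is vacuously atomic.

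For (5), in a domain, (1) together with the condition $ab = 0 \Rightarrow a = 0$ or $b = 0$ forces every nonzero non-unit to admit only its trivial length-one $\tau_z$-factorization, while $0$ admits only $0 = \lambda \cdot 0$; uniqueness up to associate is then immediate. For (6), $b \sim b'$ yields $b' = sb$ and hence $ab' = s(ab) = 0$ whenever $ab = 0$, establishing associate preservation for $\tau_z$; the $\approx$ and $\cong$ versions follow since each implies $\sim$ by Lemma \ref{lem: relations}(1). For the failure at $\tau_z^{\Delta}$, I would exhibit $R = \Z_9$ with $a = b' = 6$ and $b = 3$: one verifies $3 \cong 6$ and $6 \tau_z^{\Delta} 3$, yet $6 \tau_z^{\Delta} 6$ fails by the diagonal restriction, so very strong (hence also strong and ordinary) associate preservation fails.

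The main obstacle is not any single hard step but careful bookkeeping around (a) the $\tau_z$-versus-$\tau_z^{\Delta}$ distinction in (6) and (b) the edge case $a = 0$ in (3)--(4); everywhere else one invocation of (1) suffices.
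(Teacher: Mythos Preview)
Your proposal is correct and follows essentially the same approach as the paper's proof: establish (1) first and then repeatedly invoke it for the remaining parts. The only noteworthy difference is your choice of counterexample for non-combinability in (2)---you use $\Z_2 \times \Z_2 \times \Z_2$ with the standard idempotents, whereas the paper uses $\Z/30\Z$ with $0 = 6 \cdot 10 \cdot 15$---but both examples exhibit the same failure mode (combining two factors yields $0 \notin R^{\#}$), and your non-divisive example in $\Z/12\Z$ and your $\Z/9\Z$ example for (6) match the paper's exactly.
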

\begin{proof}
(1) Let $a\in R^{\#}$.  Suppose $a=\lambda a_1 \cdots a_n$ is a $\tau_z^{\Delta}$ ($\tau_z$)-factorization.  If $n\geq 2$, then $a_1 \cdot a_2 =0$, so $a=0$, a contradiction.  Hence, $n=1$ and there are only trivial $\tau_z^{\Delta}$($\tau_z$)-factorizations.
\\
\indent (2) $\tau_z^{\Delta}$($\tau_z$) is clearly symmetric.  Let $R=\Z/30\Z$ and consider $0=6\cdot 10 \cdot 15$ is a $\tau_z^{\Delta}$($\tau_z$)-factorization, but $0=6\cdot 150=6\cdot 0$ is not a $\tau_z^{\Delta}$ ($\tau_z$)-factorization. This shows $\tau_z^{\Delta}$($\tau_z$) is not combinable, and hence not multiplicative.   Now let $R=\Z/12\Z$, we have $2 \tau_z^{\Delta}(\tau_z) 6$, but $2\not \tau_z^{\Delta}(\not \tau_z) 3$, so $\tau_z^{\Delta}(\tau_z)$ is not divisive.  Every non-trivial $\tau_z^{\Delta}$ ($\tau_z$)-factor is non-zero and in light of (1) has no non-trivial $\tau_z^{\Delta}$ ($\tau_z$)-factorizations so $\tau_z^{\Delta}$ ($\tau_z$) is vacuously refinable.
\\
\indent(3) Let $(a)\subseteq (b)$ with $b\mid_{\tau_z^{\Delta}}a$ ($b\mid_{\tau_z}a$), say $a=\lambda b b_1 \cdots b_n$ is a $\tau_z^{\Delta}$ ($\tau_z$)-factorization.  If $n \geq 1$, $b\tau_z^{\Delta} (\tau_z) b_1 \Rightarrow a=0$.  If $n=0$, then $(a)=(b)$.  Hence, the longest $\tau_z^{\Delta} (\tau_z)$-ascending chain has length 1.
\\
\indent(4) We have already seen that all non-zero, non-units are $\tau_z^{\Delta}$ ($\tau_z$)-atoms from (1).  If $Z(R)=0$, then $0$ has only trivial factorizations, making it a $\tau_z^{\Delta}$ ($\tau_z$)-atom.  Suppose $R$ is not a domain.  Choose an $x \in Z(R)$ such that there is a $y\in R$ such that $xy=0$ for $x,y \neq 0$ and $x \neq y$.  Then $0=xy$ is a $\tau_z^{\Delta}$ ($\tau_z$)-atomic factorization of $0$.  If it is not possible to choose such an $x$, then $x^2=0$ for every $0\neq x \in Z(R)$.  This means $0$ itself is a $\tau_z^{\Delta}$-atom ($0=x\cdot x$ is a $\tau_z$-atomic factorization).
\\
\indent(5) If $R$ is a domain, then $Z(R)=0$ and we have $\tau_z^{\Delta} (\tau_z)=\emptyset$.  There are only trivial $\tau_z^{\Delta}$ ($\tau_z$)-factorizations, so every non-unit is a $\tau_z^{\Delta}$ ($\tau_z$)-atom, and so $R$ is a $\tau_z^{\Delta}$ ($\tau_z$)-atomic-associate-UFR.
\\
\indent(6) Suppose $a\tau_z b$, with $a \sim a'$ (resp. $a\approx a'$, $a\cong a'$).  Then in all cases, we have $(a)=(a')$ and therefore $a'=ra$ for some $r\in R$. We have $ab=0$, but by substitution, we have $a'b=(ra)b=r(ab)=0$, so $a' \tau_z b$ as well.  This shows $\tau_z$ is associate (resp. strongly associate, very strongly associate) preserving.  On the other hand, let $R=\Z/9\Z$.  $3\sim 6$ (resp. $3\approx 6$, $3 \cong 6$) and $3 \tau_z^{\Delta} 6$; however, $3 \not \tau_z^{\Delta} 3$.  Thus $\tau_z^{\Delta}$ is not associate (resp. strongly associate, very strongly associate) preserving.
\end{proof}
We begin by stating a theorem which summarizes some results about zero-divisor graphs.  We denote the complete graph on $r$ vertices with $K^r$, and define $\omega(\Gamma(R))$ to be the clique number of $\Gamma(R)$.  This is the largest integer $r \geq 1$ with $K^r \subseteq \Gamma(R)$.  If $K^r \subseteq \Gamma(R)$ for all $r\geq 1$, then we say $\omega(\Gamma(R))=\infty$.  We use min$(R)$ to denote the set of minimal prime ideals of $R$.
\begin{theorem}\label{thm: ZG results} (Zero-divisor graph results) Let $R$ be a commutative ring.
\\(1) $\Gamma(R)$ is connected and has diameter less than or equal to $3$.
\\(2) $\Gamma(R)$ is finite if and only if $R$ is a domain or $R$ is finite.
\\(3) $\omega(\Gamma(R))=\infty$ if and only if $\Gamma(R)$ has an infinite clique (a complete subgraph).
\\(4) $\omega(\Gamma(R)) < \infty$ if and only if $|$Nil$(R)|<\infty$ and Nil$(R)$ is a finite intersection of primes, that is $|\text{min}(R)| < \infty$.
\\(5) All rings with $|\Gamma(R)|\leq 4$ have been classified up to isomorphism.
\\(6) All finite rings with $|\omega(\Gamma(R))|\leq 3$ have been classified up to isomorphism.
\\(7) If $R=R_1 \times \cdots \times R_n$ with $R_i$ domains, $n \geq 2$, $\omega(\Gamma(R))=n$.
\end{theorem}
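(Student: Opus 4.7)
The plan is to assemble this theorem entirely from the established zero-divisor graph literature, since each part restates (or is an immediate consequence of) a known result rather than requiring a new argument. Concretely, parts (1)--(2) are the foundational theorems of Anderson--Livingston in \cite{Davidanderson}; part (4) is the Anderson--Mulay characterization of when the clique number is finite; and parts (5)--(6) are classification theorems which I would cite without reproof, pointing the reader to \cite{Andersonzdg, Livingston} and the subsequent classification papers in that line of work. The role of the theorem in this paper is only to collect facts needed for the $\tau_z$ analysis in Section Five, so the proof should be short and citation-driven.

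For parts (1)--(2), I would reproduce Anderson--Livingston's bridging argument: given $x,y \in Z(R)^*$ with $xy \ne 0$, look at annihilators of $x$ and $y$ and case on whether they intersect, which produces a path of length at most $3$ in $\Gamma(R)$; for (2), if $R$ is not a domain and $\Gamma(R)$ is finite, one shows every nonzero element of $R$ is a zero-divisor, and then that each annihilator is finite, forcing $R$ itself to be finite. For part (4), the Anderson--Mulay argument is to observe that an infinite clique in $\Gamma(R)$ forces either a nonnilpotent idempotent structure (giving infinitely many minimal primes) or an infinite nilradical; conversely, finitely many minimal primes with a finite nilradical bounds the clique number by a direct pigeonhole on which minimal prime each vertex avoids.

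Part (7) admits a short direct proof I would write out: the idempotents $e_1, \ldots, e_n$ (standard basis) satisfy $e_ie_j = 0$ for $i \ne j$ and none is zero or a unit, giving a $K^n$ in $\Gamma(R)$, hence $\omega(\Gamma(R)) \ge n$; for the reverse inequality, given any clique $\{x_1, \ldots, x_m\}$ with $m > n$, the pigeonhole principle produces two vertices $x_i, x_j$ with a common nonzero coordinate (since there are only $n$ coordinates and each nonzero zero-divisor must have at least one zero coordinate), and then in that coordinate $x_i x_j \ne 0$ because the factor $R_k$ is a domain, contradicting $x_ix_j = 0$.

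The one step I expect to require some care is part (3), the equivalence of $\omega(\Gamma(R)) = \infty$ (by definition, the existence of arbitrarily large finite cliques) with the existence of a single infinite clique as a subgraph. In arbitrary graphs this implication can fail, so the argument is genuinely ring-theoretic: one uses that a sequence of finite cliques in $\Gamma(R)$ of unbounded size produces, via compactness applied to the annihilator ideals (or equivalently, via the argument in part (4) that unbounded clique number forces either $|\operatorname{Nil}(R)| = \infty$ or infinitely many minimal primes), an actual infinite set of pairwise orthogonal nonzero zero-divisors. I would either give this argument explicitly or cite the relevant statement from the zero-divisor graph literature, and this is the step I would flag as the most substantive and not purely routine.
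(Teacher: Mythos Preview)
Your approach is essentially the same as the paper's: this theorem is a compendium of known zero-divisor graph results, and the paper's own proof consists entirely of bare citations with no argument reproduced---one line per part, pointing to \cite{Livingston} for (1), (2), (5), to \cite{Beck} for (3), (4), (6), to \cite{Andersonzdg} for (6), and to \cite{Davidanderson} for (7). Your plan to sketch the arguments is more than the paper does, and the direct proof you give for (7) is correct and pleasant, but strictly speaking unnecessary for matching the paper.

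A couple of citation corrections: parts (1)--(2) are in Anderson--Livingston \cite{Livingston}, not \cite{Davidanderson}; and part (4) is due to Beck \cite{Beck}, not ``Anderson--Mulay.'' Also, your one-line sketch of (2) is slightly off: the argument is not that every nonzero element is a zero-divisor, but rather that for any $x \in Z(R)^*$ the map $r \mapsto rx$ has image contained in the finite set $Z(R)$ and finite kernel $\operatorname{ann}(x) \subseteq Z(R)$, forcing $R$ finite. Since you are citing the result anyway this does not affect correctness, but if you do reproduce the argument, fix that step.
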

\begin{proof}
(1) \cite[Theorem 2.3]{Livingston}.  (2) \cite[Theorem 2.2]{Livingston}.  (3) and (4) \cite[Theorem 3.7]{Beck}.  (5) \cite[Example 2.1]{Livingston}.  (6) \cite[Page 226]{Beck} and \cite[Theorem 4.4]{Andersonzdg}.  (7) \cite[Theorem 3.7]{Davidanderson}.
\end{proof}
\begin{theorem}\label{thm: wffr} Let $R$ be a commutative ring.  The following are equivalent.
\\(1) $\mid\Gamma(R)\mid < \infty$.
\\(2) $R$ is a domain or $R$ is finite.
\\(3) $R$ is a strong-$\tau_z^{\Delta}$-FFR (for every non-unit $a\in R$, there are only a finite number of non-trivial $\tau_z^{\Delta}$-factorizations of $a$).
\\(4) $R$ is a strong-$\tau_z^{\Delta} (\tau_z)$-WFFR (for every non-unit $a\in R$, there are only a finite number of non-trivial $\tau_z^{\Delta} (\tau_z)$-divisors of $a$).
\\(5) $R$ is a strong-$\tau_z^{\Delta} (\tau_z)$-atomic-divisor finite-ring (for every non-unit $a\in R$, there are only a finite number $\tau_z^{\Delta} (\tau_z)$-divisors which are $\tau_z^{\Delta} (\tau_z)$-atoms).
\\(6) $\Gamma(R)$ has a finite number of complete subgraphs $K^r$ for $r\geq 2$.
\end{theorem}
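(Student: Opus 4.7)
The plan is to establish (1) $\Leftrightarrow$ (2) directly from Theorem~\ref{thm: ZG results}(2), then the cycle (2) $\Rightarrow$ (3) $\Rightarrow$ (4) $\Rightarrow$ (5) $\Rightarrow$ (2), and finally (2) $\Leftrightarrow$ (6) by a short graph-theoretic argument, working uniformly for both $\tau_z^{\Delta}$ and $\tau_z$.  The guiding observation is that Theorem~\ref{thm: tau props}(1) forces every non-zero non-unit to be a $\tau_z^{\Delta}$-atom admitting only trivial factorizations, so each of (3)--(5) is in substance a statement about factorizations or divisors of the single element $0$.  Under this reduction, non-trivial $\tau_z^{\Delta}$-factorizations $0 = \lambda a_1 \cdots a_n$ (with $n \geq 2$) correspond, up to $\lambda$ and ordering, to cliques $K^n \subseteq \Gamma(R)$, and $\tau_z^{\Delta}$-divisors of $0$ correspond to vertices of $\Gamma(R)$ having some distinct neighbor.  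The $\tau_z$ variant differs only by also allowing the ``loop'' factorization $0 = a\cdot a$ when $a^2 = 0$, which does not affect any finiteness count.

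The implication (2) $\Rightarrow$ (3) splits into cases: if $R$ is a domain then no non-trivial $\tau_z^{\Delta}$-factorizations exist at all, while if $R$ is finite then there are only finitely many tuples of elements of $R$ available to form factorizations of $0$.  The implications (3) $\Rightarrow$ (4) and (4) $\Rightarrow$ (5) follow from the relevant cases of Theorem~\ref{thm: ff props}(5), with the additional remark that by Theorem~\ref{thm: tau props}(1) every $\tau_z^{\Delta}$-divisor is already a $\tau_z^{\Delta}$-atom, so (4) and (5) are in fact the same condition in this setting.

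The key step is (5) $\Rightarrow$ (2), which I will prove by contraposition.  Assume $R$ is neither a domain nor finite; by Theorem~\ref{thm: ZG results}(2) we have $|\Gamma(R)| = \infty$, and since $\Gamma(R)$ is connected with at least two vertices by Theorem~\ref{thm: ZG results}(1), every vertex $b$ has a distinct neighbor $c$ with $bc = 0$.  Then $0 = bc$ is a non-trivial $\tau_z^{\Delta}$-factorization exhibiting $b$ as a $\tau_z^{\Delta}$-atomic divisor of $0$, producing infinitely many such divisors and contradicting (5).  Finally, (2) $\Leftrightarrow$ (6) comes from the same graph-theoretic facts: a finite $\Gamma(R)$ has only finitely many subgraphs, while an infinite connected $\Gamma(R)$ contains at least $|\Gamma(R)| - 1$ edges, hence infinitely many copies of $K^2$.

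The main potential obstacle is bookkeeping across the two relations: in particular, checking that in the $\tau_z$ case---where a vertex $a$ with $a^2 = 0$ is already a $\tau_z$-divisor of $0$ via the loop factorization $0 = a\cdot a$---each argument still goes through, and that small cases such as $|\Gamma(R)| = 1$ (which, by Theorem~\ref{thm: ZG results}(2), can only occur for a finite $R$) do not disturb the equivalences.  Once the correspondence between factorizations and cliques is fixed and Theorems~\ref{thm: tau props} and \ref{thm: ZG results} are invoked, every implication is essentially immediate.
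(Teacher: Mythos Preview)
Your proof is correct and follows essentially the same route as the paper: both use Theorem~\ref{thm: ZG results}(2) for (1) $\Leftrightarrow$ (2), reduce (3)--(5) to statements about the single element $0$ via Theorem~\ref{thm: tau props}(1), and exploit connectedness of $\Gamma(R)$ to produce infinitely many edges (hence divisors and copies of $K^2$) whenever $|\Gamma(R)| = \infty$. One small caution: Theorem~\ref{thm: ff props}(5) is stated for the $\beta$-versions (finiteness up to associate), not the ``strong'' versions counted literally, so your citation for (3) $\Rightarrow$ (4) is technically off---though the underlying argument (finitely many factorizations yield finitely many factors) transfers verbatim, and the paper simply argues this implication directly.
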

\begin{proof}
(1) $\Leftrightarrow$ (2) This is given by Theorem \ref{thm: ZG results} (2).
\\
\indent(2) $\Rightarrow$ $\left[(3) \text{ and } (4) \right]$ For $R$ a domain, the result is trivial as every non-unit is a $\tau_z^{\Delta} (\tau_z)$-atom.  For $R$ finite, say $|R|= n$ we see that (4) clearly holds as there are only $n$ possible $\tau_z^{\Delta} (\tau_z)$-divisors.  Furthermore, because no factor can be repeated in a $\tau_z^{\Delta}$-factorization, there are at most $2^n$ possible $\tau_z^{\Delta}$-factorizations.
\\
\indent $\left[(3) \text{ or } (4) \right]$ $\Rightarrow$ (1) Suppose $|\Gamma(R)|$ is infinite.  The $\Gamma(R)$ has an infinite number of distinct vertices; say $\{x_i\}_{i=1}^{\infty}$.  Recall, $\Gamma(R)$ is connected, so every vertex is adjacent to another distinct vertex, say $y_i$ for each $i$.  Then $\{x_iy_i\}$ is an infinite collection of $\tau_z^{\Delta} (\tau_z$)-factorizations of $0$ up to reordering.  This contradicts (3).  Each $x_i$ is a distinct non-trivial $\tau_z^{\Delta} (\tau_z)$-divisor of $0$ which contradicts (4).
\\
\indent (4) $\Leftrightarrow$ (5) Every $\tau_z^{\Delta} (\tau_z)$-divisor is non-zero and hence is $\tau_z^{\Delta} (\tau_z)$-atomic.  Every $\tau_z^{\Delta} (\tau_z)$-atomic $\tau_z^{\Delta} (\tau_z)$-divisor is certainly a $\tau_z^{\Delta} (\tau_z)$-divisor.
\\
\indent (3) $\Rightarrow$ (6) Suppose there were an infinite number of distinct complete subgraphs in $\Gamma(R)$ of size at least 2.  Each subgraph corresponds to a distinct non-trivial $\tau_z^{\Delta} (\tau_z)$-factorization of $0$ by taking the product of the vertices in the given complete subgraph, contradicting (3).
\\
\indent(6) $\Rightarrow$ (1) Suppose for a moment $|\Gamma(R)|$ were infinite.  Let $\{x_i\}_{i=1}^{\infty}$ be an infinite set of distinct vertices.  Recall, $\Gamma(R)$ is connected, so every vertex $x_i$ must be adjacent to another vertex, say $y_i$.  Then $x_i$ and $y_i$ form a complete subgraph of size $2$, and this generates an infinite collection, contradicting (6).
\end{proof}
\begin{theorem}\label{thm: ffr} Let $R$ be a commutative ring.  The following are equivalent.
\\(1) $\text{Nil}(R)=0$ and $\mid\Gamma(R)\mid<\infty$.
\\(2) $R$ is a strong-$\tau_z$-FFR (for every non-unit $a\in R$, there are only a finite number of non-trivial $\tau_z$-factorizations).
\\(3) $R$ is a domain or a finite reduced ring.
\\(4) $R$ is a domain or $R\cong K_1 \times \cdots \times K_n$ with $K_i$ a finite field for $1\leq i \leq n$ with $n\geq 2$.
\\(5) Nil$(R)=0$ and $\Gamma(R)$ has a finite number of complete subgraphs $K^r$ with $r \geq 2$.
\end{theorem}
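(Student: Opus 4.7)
The strategy is to reduce the new conditions involving $\tau_z$ to the $\tau_z^{\Delta}$ statements of Theorem \ref{thm: wffr} by exploiting the identity $\tau_z = \tau_z^{\Delta}$ on $R^{\#}$ that holds whenever $\text{Nil}(R) = 0$. Indeed, if $a \in R^{\#}$ satisfies $a\,\tau_z\,a$, then $a^2 = 0$, which forces $a$ to be a non-zero nilpotent; so in a reduced ring no element of $R^{\#}$ relates to itself under $\tau_z$, and therefore $\tau_z$ and $\tau_z^{\Delta}$ produce precisely the same factorizations of every element. Under this identification, $R$ being a strong-$\tau_z$-FFR is the same as $R$ being a strong-$\tau_z^{\Delta}$-FFR, and Theorem \ref{thm: wffr} then bridges directly to $|\Gamma(R)| < \infty$ and the related finiteness conditions.

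For the easier equivalences, I first observe that (1) $\Leftrightarrow$ (3) follows directly from Theorem \ref{thm: ZG results}(2): $|\Gamma(R)| < \infty$ is equivalent to $R$ being a domain or $R$ being finite, and appending $\text{Nil}(R) = 0$ leaves domains untouched while restricting the finite case to finite reduced rings. The equivalence (3) $\Leftrightarrow$ (4) is a standard structural observation: a finite commutative ring is Artinian and therefore decomposes as a product of finite local rings, and a reduced local Artinian ring must be a field because every element of its maximal ideal is nilpotent; conversely any finite product of finite fields is finite and reduced. Finally, (1) $\Leftrightarrow$ (5) follows after using $\text{Nil}(R) = 0$ to identify the $\tau_z$-factorizations of $0$ with cliques in $\Gamma(R)$ and invoking the implication (1) $\Leftrightarrow$ (6) of Theorem \ref{thm: wffr}.

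The main work lies in the implication (2) $\Rightarrow$ (1), where the crux is to prove $\text{Nil}(R) = 0$ from the strong-$\tau_z$-FFR hypothesis. Suppose for contradiction that $0 \neq y \in \text{Nil}(R)$, and let $n \geq 2$ be minimal with $y^n = 0$. Setting $z := y^{n-1}$ gives $z \neq 0$ with $z^2 = y^{2(n-1)} = 0$ since $2(n-1) \geq n$. The element $z$ lies in $R^{\#}$ because nilpotents are never units, and for every $k \geq 2$ the expression $0 = 1 \cdot z \cdot z \cdots z$ with $k$ copies of $z$ is a non-trivial $\tau_z$-factorization of $0$, since $z\,\tau_z\,z$ holds. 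These factorizations have pairwise distinct lengths, so they cannot be rearrangements of one another up to strong associate, contradicting the strong-$\tau_z$-FFR hypothesis. Hence $\text{Nil}(R) = 0$, whence $\tau_z = \tau_z^{\Delta}$ and Theorem \ref{thm: wffr} yields $|\Gamma(R)| < \infty$. The converse (1) $\Rightarrow$ (2) is then immediate from the same identification and Theorem \ref{thm: wffr}. The main obstacle in the whole argument is this nilpotent-element construction; everything else is routine bookkeeping once Theorem \ref{thm: wffr} and the structure theorem for finite Artinian rings are in hand.
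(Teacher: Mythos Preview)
Your proof is correct and follows essentially the same approach as the paper: the crucial step in both is the observation that a non-zero nilpotent yields an element $z$ with $z^2=0$, producing infinitely many $\tau_z$-factorizations $0=z^k$ and forcing $R$ to be reduced. The only organizational difference is that you package the remaining implications by explicitly invoking the identity $\tau_z=\tau_z^{\Delta}$ on reduced rings and appealing wholesale to Theorem~\ref{thm: wffr}, whereas the paper re-derives several of those implications directly; your route is slightly more economical but not substantively different.
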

\begin{proof}
$(1) \Rightarrow (2)$ Suppose there are an infinite number of non-trivial $\tau_z$-factorizations of $0$.  All $\tau_z$-factors are distinct.  If $a$ were a repeated $\tau_z$ factor of $0$, then $a\tau_z a \Rightarrow a^2=0$ implies $0\neq a \in \text{Nil}(R)$, a contradiction.  Hence, there can only be at most $2^{|\Gamma(R)|}$ non-trivial $\tau_z$-factorizations, a contradiction.
\\
\indent(2) $\Rightarrow$ (5) Suppose there were an infinite number of distinct complete subgraphs in $\Gamma(R)$ of size at least 2.  Each such complete subgraph corresponds to a distinct non-trivial $\tau_z$-factorization of $0$ contradicting (2).  Suppose $R$ were not reduced.  Suppose $0\neq x \in \text{Nil}(R)$, with $x^k=0$ with $k$ minimal.  Then $0=(x^{k-1})^i$ is a $\tau_z$ factorization of $0$ of length $i$ for any $i\geq 2$.  Hence $R$ must be reduced.
\\
\indent(5) $\Rightarrow (1)$ Suppose there were an infinite number of vertices in $\Gamma(R)$.  We recall that $\Gamma(R)$ is connected.  We could find paths connecting all the vertices.  This would certainly require an infinite number of edges.  This yields an infinite number of $K^2$ subgraphs, contradicting (5).
\\
\indent$(1) \Leftrightarrow (3)$  We have now added the hypothesis that Nil$(R)=0$ to both (1) and (2) of Theorem \ref{thm: wffr}, so the equivalence remains.
\\
\indent(3) $\Leftrightarrow$ (4) This is well known.
\end{proof}
We now introduce the notion of the associated zero-divisor graph, $\Gamma(R/ \sim)$.  The vertices are now represented by a zero-divisor up to associate, and an edge between two zero-divisor representatives $a$ and $b$ if $ab=0$.  Recall $\sim$ is an equivalence relation, and one can check the edge relation is well defined.  We record two analogous theorems, but omit the proofs.  The proofs are nearly identical to those of Theorems \ref{thm: wffr} and \ref{thm: ffr} except now uniqueness is only up to associate and reordering.
\begin{theorem}\label{thm: zd ass ffr}Let $R$ be a commutative ring.  The following are equivalent.
\\(1) $|\Gamma(R/\sim)| < \infty$ (There are a finite number of zero-divisors up to associate).
\\(2) $R$ is a $\tau_z^{\Delta}$-associate-FFR.
\\(3) $R$ is a $\tau_z^{\Delta} (\tau_z)$-associate-WFFR.
\\(4) $R$ is a $\tau_z^{\Delta} (\tau_z)$-atomic-associate-divisor finite ring.
\\(5) $\Gamma(R/\sim)$ has a finite number of complete subgraphs $K^r$ for $r\geq 2$.
\end{theorem}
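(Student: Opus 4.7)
The strategy is to transcribe the proof of Theorem~\ref{thm: wffr} almost verbatim, with $\Gamma(R)$ replaced by $\Gamma(R/\sim)$ and all counts performed modulo $\sim$. As before, Theorem~\ref{thm: tau props}(1) reduces the entire argument to $\tau_z^{\Delta}(\tau_z)$-factorizations of $0$, since every nonzero non-unit is $\tau_z^{\Delta}(\tau_z)$-atomic and admits only the trivial factorization.

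For the forward implications $(1)\Rightarrow(2),(3),(4),(5)$: any nontrivial $\tau_z^{\Delta}(\tau_z)$-divisor of a non-unit is a nonzero zero-divisor, so its associate class is a vertex of $\Gamma(R/\sim)$. Under (1) there are only finitely many such vertices, which immediately gives (3); since every such divisor is also $\tau_z^{\Delta}(\tau_z)$-atomic by Theorem~\ref{thm: tau props}(1), (4) follows as well. For (2), each nontrivial $\tau_z^{\Delta}$-factorization of $0$ records, up to rearrangement and $\sim$, a clique in $\Gamma(R/\sim)$, and a finite graph has only finitely many cliques. Finally, (5) is immediate because a finite graph has only finitely many subgraphs of each fixed order.

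For the reverse implications I would argue by contraposition, exactly as in Theorem~\ref{thm: wffr}. Assume $|\Gamma(R/\sim)|=\infty$; connectedness of $\Gamma(R/\sim)$, inherited from $\Gamma(R)$ by projecting paths and collapsing consecutive repetitions, forces a connected graph on infinitely many vertices to contain infinitely many distinct edges. Each edge $[x]-[y]$ supplies a $\tau_z^{\Delta}(\tau_z)$-factorization $0=xy$, and these length-two factorizations are pairwise inequivalent up to rearrangement and associate, simultaneously contradicting (2), (3), (4), and (5).

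The principal obstacle is verifying that the vertex and edge data of $\Gamma(R/\sim)$ are well defined and that the graph is connected: well-definedness of the edge relation follows from Theorem~\ref{thm: tau props}(6), since $a\sim a'$ together with $ab=0$ yields $a'b=0$, and connectedness is obtained by projecting any path in $\Gamma(R)$ (which is connected by Theorem~\ref{thm: ZG results}(1)) to $\Gamma(R/\sim)$ and collapsing consecutive repetitions of the same class. Once these foundational points are in place, every counting step from Theorem~\ref{thm: wffr} transcribes without essential change.
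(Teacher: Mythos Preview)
Your overall plan is exactly what the paper does: it omits the proof and declares it ``nearly identical'' to that of Theorem~\ref{thm: wffr}. However, your step $(1)\Rightarrow(2)$ contains a genuine gap. You assert that each nontrivial $\tau_z^{\Delta}$-factorization of $0$ records, up to rearrangement and $\sim$, a clique in $\Gamma(R/\sim)$, and that finiteness of the graph then bounds the number of such factorizations. But the map from factorizations (modulo rearrangement and $\sim$) to cliques is not injective: in a $\tau_z^{\Delta}$-factorization $0=a_1\cdots a_n$ the factors are required to be pairwise \emph{unequal}, not pairwise \emph{non-associate}, so several $a_i$ may lie in the same associate class and factorizations of different lengths can collapse to the same clique.

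In fact the implication $(1)\Rightarrow(2)$ is false as stated. Take $R=K[x]/(x^2)$ with $K$ an infinite field. Every nonzero zero-divisor is associate to $x$, so $|\Gamma(R/\sim)|=1$ and conditions (1), (3), (4), (5) all hold. Yet for each $n\ge 2$ one may choose distinct scalars $b_1,\dots,b_n\in K^{*}$ and obtain the $\tau_z^{\Delta}$-factorization $0=(b_1x)\cdots(b_nx)$; up to rearrangement and associate this yields exactly one factorization for each length $n$, hence infinitely many, so (2) fails. The analogy with Theorem~\ref{thm: wffr} breaks down precisely here: there $|\Gamma(R)|<\infty$ forces $R$ itself to be finite (or a domain), giving an absolute bound on the factors, whereas $|\Gamma(R/\sim)|<\infty$ carries no such consequence. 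Your arguments for the equivalences among (1), (3), (4), (5) and for $(2)\Rightarrow(1)$ are sound.
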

\begin{theorem}\label{thm: z ass ffr} Let $R$ be a commutative ring.  The following are equivalent.
\\(1) $R$ is a $\tau_z$-associate-FFR.
\\(2) Nil$(R)=0$ and $\mid \Gamma(R/\sim) \mid < \infty$.
\\(3) Nil$(R)=0$ and $\Gamma(R/\sim)$ has a finite number of complete subgraphs $K^r$ for $r \geq 2$.
\end{theorem}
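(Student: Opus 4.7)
The plan is to mirror the proofs of Theorems~\ref{thm: wffr} and \ref{thm: ffr}, with equivalence now taken up to associate and rearrangement rather than equality.

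For $(1) \Rightarrow (2)$, I would first rule out non-trivial nilpotents. If $0 \neq x \in \mathrm{Nil}(R)$ has $x^k = 0$ with $k \geq 2$ minimal, then $(x^{k-1})^i = 0$ is a non-trivial $\tau_z$-factorization for every $i \geq 2$, and factorizations of distinct lengths cannot be equivalent under associate and rearrangement; this contradicts the FFR property. Assuming then that $\mathrm{Nil}(R) = 0$, I would show $|\Gamma(R/\sim)|$ is finite: otherwise, since $\Gamma(R)$ is connected by Theorem~\ref{thm: ZG results}(1), its natural projection $\Gamma(R/\sim)$ is also connected and therefore has infinitely many edges, each producing a distinct non-trivial length-$2$ $\tau_z$-factorization of $0$ up to associate.

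For $(2) \Rightarrow (1)$, the crucial ingredient is the following lemma for reduced rings: \emph{if $a \sim b$ and $ab = 0$, then $a = 0$}. This follows from the ideal computation $(a^2) = (a)(a) = (a)(b) = (ab) = (0)$, giving $a^2 = 0$ and hence $a = 0$ by reducedness. Consequently, in any non-trivial $\tau_z$-factorization $0 = \lambda a_1 \cdots a_n$ the $a_i$ lie in pairwise distinct associate classes and form a clique in $\Gamma(R/\sim)$, so the number of such factorizations up to associate and rearrangement is at most the number of cliques of size $\geq 2$, bounded by $2^{|\Gamma(R/\sim)|}$. Combined with Theorem~\ref{thm: tau props}(1), which guarantees only trivial $\tau_z$-factorizations for non-zero non-units, this yields the FFR conclusion.

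The equivalence $(2) \Leftrightarrow (3)$ is straightforward: a finite vertex set admits only finitely many complete subgraphs, and conversely an infinite vertex set paired with connectedness of $\Gamma(R/\sim)$ forces infinitely many $K^2$ subgraphs. The principal obstacle is the reduced-ring lemma and the resulting correspondence between non-trivial $\tau_z$-factorizations of $0$ (up to associate and rearrangement) and cliques of size $\geq 2$ in $\Gamma(R/\sim)$; without it, one would have to worry about repeated associate classes inflating the count. Once this is secured, the remaining steps transcribe essentially verbatim from the proofs of Theorems~\ref{thm: wffr} and \ref{thm: ffr}.
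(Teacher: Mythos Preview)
Your proposal is correct and follows precisely the route the paper indicates: it explicitly states that the proof is omitted and ``nearly identical to those of Theorems~\ref{thm: wffr} and \ref{thm: ffr} except now uniqueness is only up to associate and reordering.'' The one genuinely new detail you supply---the reduced-ring lemma that $a\sim b$ and $ab=0$ force $a=0$, hence no two $\tau_z$-factors can lie in the same associate class---is exactly the ingredient needed to make the analogy go through, and your derivation of it via $(a^2)=(a)(a)=(a)(b)=(ab)=(0)$ is clean.
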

\begin{example}
\end{example}
Consider $R=\Z/4\Z$.  We have $\tau_z=\{(2,2)\}$, while $\tau_z^{\Delta}=\emptyset$.  $0=2^i$ is a $\tau_z$-factorization for all $i\geq 2$, so $R$ is not a (strong-)$\tau_z$-associate-FFR; however, $R$ is certainly a (strong)-$\tau_z^{\Delta}$-associate-FFR since there are only trivial factorizations.  Hence the items given in Theorems \ref{thm: wffr} and \ref{thm: ffr} (resp. \ref{thm: z ass ffr} and \ref{thm: zd ass ffr}) cannot be combined and still maintain equivalence.

\begin{theorem} \label{thm: tau bfr}Let $R$ be a commutative ring.  Then $R$ is a $\tau_z^{\Delta}(\tau_z)$-BFR if and only if ($R$ is reduced and) $\omega(\Gamma(R))$ is finite.\end{theorem}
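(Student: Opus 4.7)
The plan is to reduce the BFR condition to bounding the lengths of $\tau$-factorizations of $0$, and then to identify such factorizations with cliques in $\Gamma(R)$. By Theorem \ref{thm: tau props}(1), every non-zero, non-unit $a \in R^{\#}$ admits only the trivial $\tau_z^{\Delta}(\tau_z)$-factorization, so one may simply take $N(a)=1$ for all such $a$. Consequently, $R$ is a $\tau_z^{\Delta}(\tau_z)$-BFR if and only if there is a uniform bound on the lengths of the $\tau_z^{\Delta}(\tau_z)$-factorizations of $0$.

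Next, I would analyze the structure of a $\tau_z^{\Delta}$-factorization $0 = \lambda a_1 \cdots a_n$: the conditions $a_i a_j = 0$ and $a_i \neq a_j$ for $i \neq j$ mean exactly that $\{a_1, \ldots, a_n\}$ is the vertex set of a complete subgraph of $\Gamma(R)$. Conversely, any clique $\{v_1, \ldots, v_n\}$ in $\Gamma(R)$ yields a $\tau_z^{\Delta}$-factorization $0 = v_1 \cdots v_n$ (one checks $v_1 \cdots v_n = 0$ from any one edge). This establishes a bijective correspondence, up to the unit and rearrangement, between $\tau_z^{\Delta}$-factorizations of $0$ of length $n$ and cliques $K^n$ in $\Gamma(R)$. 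Therefore the supremum of factorization lengths is precisely $\omega(\Gamma(R))$, and $R$ is a $\tau_z^{\Delta}$-BFR if and only if $\omega(\Gamma(R)) < \infty$.

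For the $\tau_z$ case, I would split the biconditional into two parts. If $R$ is not reduced, pick $0 \neq x \in \text{Nil}(R)$ with $x^2 = 0$ (replacing $x$ by an appropriate power of a nilpotent). Then $0 = x^n$ is a valid $\tau_z$-factorization for every $n \geq 2$, since $x \tau_z x$; hence $R$ fails to be a $\tau_z$-BFR. If $\omega(\Gamma(R)) = \infty$, then arbitrarily large cliques give arbitrarily long $\tau_z$-factorizations of $0$, and again $R$ is not a $\tau_z$-BFR. Conversely, assuming $R$ is reduced and $\omega(\Gamma(R)) < \infty$: in a $\tau_z$-factorization $0 = \lambda a_1 \cdots a_n$, any repeated factor $a_i = a_j$ would force $a_i^2 = 0$ and hence $a_i = 0$ by reducedness, contradicting that factors of a non-trivial factorization are non-zero. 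Thus the factors are distinct and the factorization is automatically a $\tau_z^{\Delta}$-factorization, bounded in length by $\omega(\Gamma(R))$.

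The argument is essentially a translation between algebra and graph theory, so there is no serious obstacle; the only subtle point is the nilpotent side of the $\tau_z$ case, where one must observe that self-loops in $\Gamma(R)$ (arising from $x^2 = 0$) immediately destroy any boundedness of $\tau_z$-factorization lengths, forcing the reducedness hypothesis to appear in the $\tau_z$ version but not in the $\tau_z^{\Delta}$ version.
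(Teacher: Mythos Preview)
Your proposal is correct and follows essentially the same approach as the paper: reduce to factorizations of $0$, identify $\tau_z^{\Delta}$-factorizations of $0$ with cliques in $\Gamma(R)$, and handle the $\tau_z$ case by showing a nonzero square-zero element produces arbitrarily long factorizations while reducedness forces distinctness of factors. The only cosmetic difference is that the paper writes the nilpotent obstruction as $0=(x^{k-1})^i$ with $k$ minimal, whereas you pass directly to an element with $x^2=0$; both are equivalent.
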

\begin{proof} All $\tau_z^{\Delta}(\tau_z)$-factorizations of non-zero elements are trivial, and hence length 1.  Let $0=\lambda a_1 \cdots a_n$ be a $\tau_z^{\Delta}(\tau_z)$-factorization.  Now $a_ia_j=0$ for all $i\neq j$, ($R$ being reduced tells us) $a_i \neq a_j$ for all $i \neq j$.  Hence, every non-trivial $\tau_z^{\Delta}(\tau_z)$-factorization corresponds precisely with a complete subgraph of $\Gamma(R)$.
\\
\indent($\Rightarrow$)If $R$ is a $\tau_z^{\Delta}(\tau_z)$-BFR, then there is a bound on the maximum length of any $\tau_z^{\Delta}(\tau_z)$-factorization of $0$, say $n$.  There can be no complete subgraph of size larger than $K^n$.  (Furthermore, suppose $0\neq x\in \text{Nil}(R)$ with $x^k=0$, the smallest such integer $k$, then $0=(x^{k-1})^i$ for $i \geq 2$ yields arbitrarily long $\tau_z$-factorizations, a contradiction, so $R$ is reduced.)
\\
\indent($\Leftarrow$) Conversely, if we assume $\omega(\Gamma(R))=n$ (, with $R$ reduced).  Then all of the $\tau_z^{\Delta}(\tau_z)$-factorizations of $0$ are bounded by $n$.  All $\tau_z^{\Delta}$ $(\tau_z)$ factorizations of non-zero elements are of length 1.  Hence $R$ is a $\tau_z^{\Delta}(\tau_z)$-BFR as desired.
\end{proof}
\begin{example} We can construct a $\tau_z^{\Delta}(\tau_z)$-BFR that has a factorization of length $n$ and no longer for any $n\geq 1$.
\end{example}
Consider the ring $R=K_1 \times \cdots \times K_n$ with $K_i$ a field for $1\leq i \leq n$.  By (7) of Theorem \ref{thm: ZG results}, we have $\omega(\Gamma(R))=n$.  This gives us a complete subgraph of size $n$ with vertices $\{x_1, \ldots ,x_n\}$ which corresponds to a $\tau_z^{\Delta} (\tau_z)$-factorization $0=x_1 \cdots x_n$.  There can be no longer factorizations or else there would be a complete subgraph of size larger than $n$.  One can simply take the standard basis $x_i=e_i:=(0_{K_1},\cdots, 1_{K_i}, \cdots, 0_{K_n})$ where the $1$ occurs in the $i^{\text{th}}$ coordinate for $1 \leq i \leq n$.
\begin{corollary}Let $R$ be a commutative ring.  $R$ is a $\tau_z^{\Delta}(\tau_z)$-BFR if and only if ($R$ is reduced and) $\text{Nil}(R)$ is finite and $\text{Nil}(R)$ is a finite intersection of prime ideals, i.e. min$(R)$ is finite. \end{corollary}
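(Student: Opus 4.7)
The plan is to obtain this corollary as an immediate chain of two previously established results, so there is no new argument to invent; one only needs to match the hypotheses correctly.

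First I would apply Theorem \ref{thm: tau bfr}, which says that $R$ is a $\tau_z^{\Delta}(\tau_z)$-BFR if and only if ($R$ is reduced and) $\omega(\Gamma(R))$ is finite. This converts the statement about bounded $\tau_z^{\Delta}(\tau_z)$-factorizations into a purely graph-theoretic condition on the clique number of the zero-divisor graph, while preserving the parenthetical reduced hypothesis in the $\tau_z$ case.

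Next I would invoke part (4) of Theorem \ref{thm: ZG results}, which asserts exactly that $\omega(\Gamma(R)) < \infty$ is equivalent to $|\text{Nil}(R)| < \infty$ together with $|\text{min}(R)| < \infty$ (i.e.\ Nil$(R)$ being a finite intersection of prime ideals). Substituting this equivalence into the characterization obtained in the previous step yields the desired biconditional, with the parenthetical reducedness carried along verbatim to handle the $\tau_z$ version.

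There is no real obstacle here: the corollary is genuinely a bookkeeping statement that stitches together Theorem \ref{thm: tau bfr} and Theorem \ref{thm: ZG results}(4). The only point worth pausing on is the parenthetical notation "($R$ is reduced and)" — in the $\tau_z^{\Delta}$ formulation the reducedness is not required, while in the $\tau_z$ formulation it is. Since Theorem \ref{thm: tau bfr} already uses this same parenthetical convention and the chain of implications preserves it, no extra verification is needed beyond observing that both sides of the equivalence inherit the reducedness clause in the $\tau_z$ case and drop it in the $\tau_z^{\Delta}$ case.
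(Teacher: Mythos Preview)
Your proposal is correct and essentially identical to the paper's own proof: both combine Theorem~\ref{thm: tau bfr} with Theorem~\ref{thm: ZG results}(4) to translate the BFR condition through the finiteness of $\omega(\Gamma(R))$ into the conditions on $\text{Nil}(R)$ and $\text{min}(R)$, carrying the parenthetical reducedness hypothesis along for the $\tau_z$ case. The paper additionally remarks that for a reduced ring $\text{Nil}(R)=0$ is trivially finite, but this is just an observation and not an extra step in the argument.
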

\begin{proof} This is a consequence of (4) from Theorem \ref{thm: ZG results} and the above theorem.  (For a reduced ring $\text{Nil}(R)=0$ which is certainly finite.)  If min$(R)$ is finite, then $\omega(\Gamma(R))$ is finite, and hence by the above theorem, $R$ is a $\tau_z^{\Delta}(\tau_z)$-BFR.  Conversely, if $R$ is a $\tau_z^{\Delta}(\tau_z)$-BFR, from the theorem, we know ($R$ must be reduced and) that $\omega(\Gamma(R))$ is finite.  Therefore, min$(R)$ is finite proving the claim.
\end{proof}
\begin{corollary}Any (reduced) Noetherian ring with $\text{Nil}(R)$ finite, or more generally any (reduced) ring with $\text{Nil}(R)$ finite that satisfies the ascending chain condition on radical ideals is a $\tau_z^{\Delta}(\tau_z)$-BFR.
\end{corollary}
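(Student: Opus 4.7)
The plan is to reduce everything to the previous corollary, which characterizes $\tau_z^{\Delta}(\tau_z)$-BFR's as those (reduced) rings with $\text{Nil}(R)$ finite and $\text{min}(R)$ finite. Since $\text{Nil}(R)$ finite is given by hypothesis, the whole proof collapses to showing that $\text{min}(R)$ is finite under either set of assumptions. Because Noetherian implies the ascending chain condition on \emph{all} ideals, in particular on radical ideals, the Noetherian case is subsumed by the ACC-on-radical-ideals case, and I would only write out the latter.

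The key lemma to invoke (or prove inline) is the classical fact: if $R$ satisfies ACC on radical ideals, then $\sqrt{0}$ is a finite intersection of prime ideals, and the primes appearing in any such irredundant decomposition are precisely the minimal primes of $R$. I would give the standard Noetherian-style argument: let $\mathcal{S}$ be the collection of radical ideals of $R$ that are \emph{not} expressible as a finite intersection of prime ideals, assume for contradiction $\mathcal{S} \neq \emptyset$, and use ACC to pick a maximal element $I \in \mathcal{S}$. Then $I$ is not itself prime (else $I$ is a one-term intersection), so choose $a, b \in R$ with $ab \in I$ but $a, b \notin I$. The radical ideals $\sqrt{I + (a)}$ and $\sqrt{I + (b)}$ strictly contain $I$, hence each is a finite intersection of primes; the one-line computation
\[
\sqrt{I + (a)} \cap \sqrt{I + (b)} = \sqrt{(I+(a))(I+(b))} = \sqrt{I + (ab)} = \sqrt{I} = I
\]
then exhibits $I$ itself as a finite intersection of primes, contradicting $I \in \mathcal{S}$. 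Applying this to $\sqrt{0}$ shows $\sqrt{0} = P_1 \cap \cdots \cap P_n$ for primes $P_i$, and any minimal prime of $R$ must contain $\sqrt{0}$ hence some $P_i$, so by minimality equals some $P_i$; thus $\text{min}(R) \subseteq \{P_1, \ldots, P_n\}$ is finite.

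With $\text{Nil}(R)$ finite (by hypothesis) and $\text{min}(R)$ finite (just shown), the previous corollary immediately yields that $R$ is a $\tau_z^{\Delta}(\tau_z)$-BFR, completing the proof. The Noetherian statement follows as the immediate special case. The main obstacle, and the only substantive work, is the ACC-on-radical-ideals argument above; everything else is bookkeeping. I would expect the proof in the paper to be quite short, either citing a standard reference for the ACC-radical result or sketching the maximal-counterexample argument in a line or two.
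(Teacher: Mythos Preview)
Your proposal is correct and takes essentially the same approach as the paper: reduce to the previous corollary by showing $\text{min}(R)$ is finite via the fact that ACC on radical ideals forces $\sqrt{0}=\text{Nil}(R)$ to be a finite intersection of primes. The only difference is that the paper cites \cite[Theorem~87]{Kaplansky} for this fact rather than proving it inline, exactly as you anticipated in your final sentence.
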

\begin{proof} This is a consequence of \cite[Theorem 87]{Kaplansky} and the fact that ($R$ being reduced yields $\sqrt{0}=\text{Nil}(R)=0$) $\text{Nil}(R)$ is a radical ideal and hence a finite intersection of primes, with $\text{Nil}(R)$ finite.
\end{proof}
\begin{theorem}\label{thm: tau hfr}Let $R$ be a commutative ring.  Then $R$ is a $\tau_z^{\Delta}(\tau_z)$-atomic-HFR if and only if ($R$ is reduced and) $|\omega(\Gamma(R))|\leq 2$. \end{theorem}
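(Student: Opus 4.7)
The plan is to reduce the HFR property to a statement about atomic factorizations of $0$ alone, then translate that into a clique condition on $\Gamma(R)$. By Theorem \ref{thm: tau props}(1), every $a\in R^{\#}$ has only the trivial $\tau_z^{\Delta}(\tau_z)$-factorization and is already a $\tau_z^{\Delta}(\tau_z)$-atom, and by (4) of that theorem $R$ is $\tau_z^{\Delta}(\tau_z)$-atomic. Thus the HFR condition has content only at $a=0$, where I must check that all atomic factorizations share a common length. Whenever $0$ admits a non-trivial factorization, $0$ itself fails to be an atom, so the trivial factorization $0=\lambda\cdot 0$ is not atomic; in that case the atomic factorizations of $0$ are precisely the non-trivial ones, whose factors lie in $R^{\#}$ and are automatically atoms.

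Next, I identify a non-trivial $\tau_z^{\Delta}$-factorization $0=\lambda a_1\cdots a_n$ with a clique $K^n$ in $\Gamma(R)$ ($n\geq 2$), since the $a_i$ are pairwise distinct non-zero zero-divisors with $a_ia_j=0$. The same correspondence holds for $\tau_z$ when $R$ is reduced, because $a_i=a_j$ would force $a_i^2=0$, hence $a_i=0$. For the forward direction, if $\omega(\Gamma(R))\geq 3$, choose a triangle $\{x,y,z\}$; then $0=xy$ and $0=xyz$ are atomic factorizations of different lengths, contradicting HFR. In the $\tau_z$ case I must additionally rule out nilpotents: if $R$ is not reduced, take $a\neq 0$ with minimal $k\geq 2$ such that $a^k=0$ and set $b=a^{k-1}$, so $b\neq 0$ with $b^2=0$; then $0=b^i$ is a $\tau_z$-factorization for every $i\geq 2$, producing atomic factorizations of arbitrarily many lengths.

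For the converse, assume $\omega(\Gamma(R))\leq 2$, and in the $\tau_z$ case assume $R$ is reduced. Then every non-trivial factorization of $0$ corresponds to a clique of size at most $2$, hence exactly $2$, so all such factorizations have length $2$. If $0$ admits no non-trivial factorization, then its unique atomic factorization is trivial of length $1$; either way all atomic factorizations of $0$ share a common length, and HFR follows. The subtlest point will be the careful handling of the trivial factorization $0=\lambda\cdot 0$ together with the clean separation of the $\tau_z^{\Delta}$ and $\tau_z$ arguments, where the nilpotent obstruction appears only on the $\tau_z$ side.
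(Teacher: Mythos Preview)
Your proof is correct and follows essentially the same approach as the paper: reduce to factorizations of $0$, identify non-trivial factorizations with cliques in $\Gamma(R)$, and use a triangle to produce factorizations of lengths $2$ and $3$ for the forward direction. The only minor differences are that you argue the nilpotent obstruction for $\tau_z$ directly (the paper instead cites the earlier $\tau_z$-BFR result), and you collapse the paper's three-case split ($\omega=0,1,2$) in the converse into a single clique-size argument; both are cosmetic.
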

\begin{proof}
($\Rightarrow$) Let $R$ be a $\tau_z^{\Delta}(\tau_z)$-atomic-HFR.  (If $R$ is not reduced, it is not even a $\tau_z$-BFR, so this is impossible.)  Suppose $|\omega(\Gamma(R))|>2$, then there is a $K^3 \subset \Gamma(R)$, say $ab=0$, $ac=0$ and $bc=0$ with $a,b,c \in Z(R)$ all distinct.  The $\tau_z^{\Delta}(\tau_z)$-factorizations $0=ab=abc$ show that $R$ cannot be a $\tau_z^{\Delta}(\tau_z)$-atomic-HFR, a contradiction.
\\
\indent($\Leftarrow$) Let $|\omega(\Gamma(R))|\leq 2$.  Recall $R$ is always $\tau_z^{\Delta}(\tau_z)$-atomic.  All non-zero elements have only trivial $\tau_z^{\Delta}(\tau_z)$-factorizations, and hence have the same length, $1$.  If $|\omega(\Gamma(R))|= 0$, then $R$ is a domain and $R$ is even a $\tau_z^{\Delta}(\tau_z)$-atomic-associate-UFR.  If $|\omega(\Gamma(R))|= 1$, then there is only one possible non-trivial $\tau_z^{\Delta}(\tau_z)$-factor.  Our hypothesis implies that any $\tau_z^{\Delta}(\tau_z)$-factorization cannot have a repeated factor.  Hence $0$ has only trivial $\tau_z^{\Delta}(\tau_z)$-factorizations.  If $\omega(\Gamma(R))= 2$, then $0$ is not a $\tau_z^{\Delta}(\tau_z)$-atom, so $\tau_z^{\Delta}(\tau_z)$-atomic-factorizations of $0$ must be at least length 2. Furthermore, $|\omega(\Gamma(R))|\leq 2$ implies $\tau_z^{\Delta}(\tau_z)$-factorizations of $0$ have length at most 2, proving $R$ is a $\tau_z^{\Delta}(\tau_z)$-atomic-HFR.
\end{proof}
The following lemma is well known, so we omit the proof.
\begin{lemma}\label{lem: idempotent}Let $R$ be a commutative ring. Suppose $(a)=(a^2)$, then there exists $e\in R$ such that $e^2=e$ and $e\approx a$.  Furthermore, $R$ is decomposable, i.e. $R=eR\times (1-e)R=R_1\times R_2$.\end{lemma}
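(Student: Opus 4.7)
The plan is to extract the idempotent $e$ directly from the relation $(a)=(a^2)$, then build a unit that carries $e$ to $a$, and finally invoke the standard Peirce decomposition for idempotents.

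First, from $(a)=(a^2)$ we have $a\in(a^2)$, so there exists $r\in R$ with $a=ra^2$. Set $e:=ra$. A short computation gives
\[
e^2=(ra)(ra)=r(ra^2)=ra=e,
\]
so $e$ is idempotent. The same relation also yields
\[
ae=a(ra)=ra^2=a,
\]
which will be the key identity both for associating $a$ to $e$ and for placing $a$ in the $eR$-component of the decomposition.

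Next I would exhibit an explicit unit $\lambda\in U(R)$ with $a=\lambda e$, which by symmetry gives $e\approx a$. The natural candidate is
\[
\lambda := a+(1-e), \qquad \mu := er+(1-e).
\]
Multiplying out and using $ae=a$, $ar=e$, and $e(1-e)=0$ shows $\lambda\mu=1$, so $\lambda$ is a unit. Evaluating $\lambda e=ae+(1-e)e=a$ gives $a\approx e$, hence $e\approx a$. The only mild subtlety here is choosing the right representative in the inverse: the naive guess $r+(1-e)$ leaves a leftover term $(1-e)r$, which is why I replace $r$ by $er$ inside $\mu$; one checks $(er)a=e(ra)=e^2=e$ so $er$ still serves as a ``local inverse'' of $a$ on the $e$-side.

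For the final assertion, I would appeal to the classical Peirce decomposition: for any idempotent $e$ of a commutative ring with $1$, the map
\[
R\longrightarrow eR\times(1-e)R,\qquad x\longmapsto (xe,\,x(1-e))
\]
is a ring isomorphism, where $eR$ and $(1-e)R$ are viewed as rings with identities $e$ and $1-e$ respectively. This is a standard verification (injectivity from $xe=0=x(1-e)\Rightarrow x=xe+x(1-e)=0$, surjectivity from $(ey,(1-e)z)=$ image of $ey+(1-e)z$, and multiplicativity from $ef\cdot eg=efg$, etc.), so $R\cong R_1\times R_2$ as claimed.

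The main obstacle is really just pinpointing the correct unit $\lambda$ and the correct ``adjusted'' inverse $er+(1-e)$; once those are in hand, every step reduces to a direct computation using $ae=a$ and $e^2=e$.
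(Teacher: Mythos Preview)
Your proof is correct: the idempotent $e=ra$ from $a=ra^2$, the unit $\lambda=a+(1-e)$ with inverse $\mu=er+(1-e)$, and the Peirce decomposition all check out exactly as you wrote. The paper itself omits the proof, calling the lemma ``well known,'' so there is no argument in the paper to compare against; your write-up supplies precisely the standard proof one would expect.
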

\begin{theorem}Let $R$ be a commutative ring.  Then $R$ is a $\tau_z$-atomic-associate-UFR if and only if $R$ is a domain or a direct product of two fields.
\end{theorem}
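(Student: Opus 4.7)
The plan is to prove both directions, with the forward direction being the substantive one. For the easy backward direction, if $R$ is a domain then $\tau_z = \emptyset$ and the claim is contained in Theorem \ref{thm: tau props}(5). If $R = K_1 \times K_2$ with $K_i$ fields, Theorem \ref{thm: ZG results}(7) gives $\omega(\Gamma(R)) = 2$, so every $\tau_z$-atomic factorization of $0$ has length at most $2$; a short direct check in $K_1 \times K_2$ shows that in such a length-two factorization one factor must be of the form $(p,0)$ and the other of the form $(0,q)$ with $p, q \neq 0$, and any two non-zero elements of $K_1 \times \{0\}$ (resp.\ $\{0\} \times K_2$) generate the same ideal, giving the required uniqueness.

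For the forward direction, assume $R$ is a $\tau_z$-atomic-associate-UFR that is not a domain. By Theorem \ref{thm: ff props}(1) $R$ is a $\tau_z$-atomic-HFR, so Theorem \ref{thm: tau hfr} yields that $R$ is reduced and $\omega(\Gamma(R)) \leq 2$. Since $R$ is reduced ($a^2 = 0$ forces $a = 0$, ruling out loops) and not a domain, the connectedness of $\Gamma(R)$ forces an actual edge, so $\omega(\Gamma(R)) = 2$. Choose non-zero $x, y \in R$ with $xy = 0$.

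The key structural step is to show every non-zero zero-divisor $z \in R$ is associate to $x$ or to $y$. Indeed, if $zw = 0$ with $w \neq 0$, then $z \neq w$ (reducedness), so $0 = z \cdot w$ is a non-trivial $\tau_z$-factorization into $\tau_z$-atoms by Theorem \ref{thm: tau props}(1); UFR uniqueness applied to $0 = xy = zw$ forces $\{[z],[w]\} = \{[x],[y]\}$ as multisets of associate classes. Apply this to $x^2$, which is a non-zero zero-divisor (since $y \cdot x^2 = 0$ and reducedness): the alternative $x^2 \sim y$ would let us write $y = r x^2$, giving $y^2 = r x^2 y = rx(xy) = 0$, contradicting reducedness. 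Hence $x^2 \sim x$, i.e.\ $(x) = (x^2)$, so by Lemma \ref{lem: idempotent} there is a non-trivial idempotent $e \approx x$ with $R = eR \times (1-e) R$; the same trick shows $1 - e \sim y$ (since $1-e \sim e$ would force $e \in (1-e)$ and hence $e^2 = 0$).

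It remains to show $eR$ and $(1-e)R$ are fields. A non-zero non-unit $a$ of $eR$, viewed in $R$, satisfies $a(1-e) = 0$ with $a, 1-e \neq 0$, so $a$ is a non-zero zero-divisor of $R$; by the structural step, $a \sim e$ or $a \sim 1-e$ in $R$. The case $a \sim 1-e$ forces $a = ea \in e(1-e)R = 0$, a contradiction. The case $a \sim e$ gives $e = sa$ for some $s \in R$, which, restricted to the $eR$-component, says $a$ is a unit in $eR$, again a contradiction. Hence $eR$ is a field, and by symmetry so is $(1-e)R$. The main obstacle is the structural step: getting the UFR hypothesis to pin down the associate classes of zero-divisors to exactly $\{[x],[y]\}$; once that is done, the identification of a non-trivial idempotent via Lemma \ref{lem: idempotent} and the verification that both summands are fields are short.
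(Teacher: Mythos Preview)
Your argument is correct and follows essentially the same route as the paper: reduce to $\omega(\Gamma(R))=2$ via the HFR theorem, use UFR-uniqueness to pin all zero-divisors to the two associate classes $[x],[y]$, force $(x)=(x^2)$ to extract a non-trivial idempotent via Lemma~\ref{lem: idempotent}, and then check both direct factors are fields. Your derivation of $x^2\sim x$ (ruling out $x^2\sim y$ directly because it would give $y^2=r x(xy)=0$) is a slightly cleaner variant of the paper's divisibility-chain argument, but otherwise the strategy and the key steps coincide.
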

\begin{proof}($\Rightarrow$) $R$ being a $\tau_z$-atomic-associate-UFR implies $R$ is a $\tau_z$-atomic-HFR, and therefore by the previous result $\omega(\Gamma(R))\leq 2$.  Now $\omega(\Gamma(R))=0 \Leftrightarrow $ $R$ is a domain.  If $\omega(\Gamma(R))=1$, then there is only one non-zero zero-divisor.  As before, this is forced to be nilpotent, making $R$ not even a $\tau_z$-atomic-BFR.  The only possibility remaining is for $\omega(\Gamma(R))=2$.  Hence there is a $\tau_z$-factorization of $0$ of length 2, say $xy=0$.  $R$ is a $\tau_z$-atomic-associate-UFR, so $x$ and $y$ are the only two $\tau_z$-factors up to associate.
\\
\indent We wish to show that $R$ is decomposable.  We cannot have $x^2=0$ or $y^2=0$ since $R$ must be reduced.  All the same, $x^2$ and $y^2$ are certainly still zero-divisors.  They must be associate to either $x$ or $y$.  If $x^2 \sim x$ we have a non-trivial idempotent element and $R$ is decomposable and we are done. Thus we may assume $x^2 \sim y$ and $y^2 \sim x$.  This means $x^2 \mid y$, so certainly $x^2 \mid y^2$, and $y^2 \mid x$.  Hence we have $x^2 \mid x$ and therefore $x^2 \sim x$.
\\
\indent In all cases there is a non-trivial idempotent element $e$ and we can write $R=R_1 \times R_2$.  As in Lemma \ref{lem: idempotent}, $x \approx e$, where $e$ is identified with $(1,0)$.  Furthermore, $x$ can be identified with $(\lambda_x, 0)$ where $\lambda_x\in U(R_1)$ and $y$ can be identified with $(0, \lambda_y)$ where $\lambda_y\in U(R_2)$.  Let $0 \neq a \in R_1$, and $0\neq b \in R_2$.  We show they must be units.  Every element of $R=R_1\times R_2$ of the form $(a,0)$ or $(0,b)$ with $a,b$ non-zero is a zero-divisor.  They must be associate to either $(\lambda_x,0)$ or $(\lambda_y,0)$.  This forces $(a)=(\lambda_x)=R_1$ and $(b)=(\lambda_y)=R_2$.  Hence we must have $a\in U(R_1)$ and $b\in U(R_2)$ which means $R_1$ and $R_2$ are fields as desired.
\\
\indent ($\Leftarrow$) For domains this is immediate.  If $R=K_1 \times K_2$ for fields $K_1, K_2$, then the only non-units are of the form $(a,0)$ and $(0,b)$.  So $0$ is not a $\tau_z$-atom.  The only non-trivial $\tau_z$-factorizations are of the form $(0,0)=(a,0)(0,b)$ for $0\neq a\in K_1$, $0\neq b\in K_2$.  This is the only factorization up to rearrangement and associate, so $R$ is a $\tau_z$-atomic-associate-UFR.
\end{proof}
\begin{theorem} Let $R$ be a finite reduced commutative ring.  Then $R$ is a $\tau_z$-atomic-associate-UFR if and only if $\tau_z$-atomic-HFR.
\end{theorem}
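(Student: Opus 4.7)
The plan is to observe that the forward direction is just Theorem \ref{thm: ff props}(1): every $\tau_z$-atomic-associate-UFR is a $\tau_z$-atomic-HFR with no further hypothesis needed. So the real content is the reverse implication, and the strategy is to classify the finite reduced rings which can be $\tau_z$-atomic-HFR and show each such ring is in fact the sort of ring already handled by the previous theorem.

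Concretely, suppose $R$ is finite, reduced, and $\tau_z$-atomic-HFR. By Theorem \ref{thm: tau hfr}, the reduced assumption together with being a $\tau_z$-atomic-HFR forces $\omega(\Gamma(R)) \leq 2$. I would split on the value of $\omega(\Gamma(R))$. If $\omega(\Gamma(R)) = 0$, then $\Gamma(R)$ is empty, so $R$ is a domain, and hence a $\tau_z$-atomic-associate-UFR by Theorem \ref{thm: tau props}(5). The case $\omega(\Gamma(R)) = 1$ is vacuous for reduced rings: a single non-zero zero-divisor $x$ with $x \tau_z x$ would satisfy $x^2 = 0$, contradicting that $R$ is reduced.

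The remaining and only substantive case is $\omega(\Gamma(R)) = 2$. Here I would invoke the structure of finite reduced commutative rings, namely that any such $R$ decomposes as a finite product of finite fields $R \cong K_1 \times \cdots \times K_n$ (this is the same structural fact used implicitly in the equivalence $(3) \Leftrightarrow (4)$ of Theorem \ref{thm: ffr}). By Theorem \ref{thm: ZG results}(7), $\omega(\Gamma(R)) = n$, so $n = 2$ and $R$ is a direct product of two finite fields. Then the previous theorem applies directly and gives that $R$ is a $\tau_z$-atomic-associate-UFR.

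I do not expect any serious obstacle; the proof is essentially a bookkeeping exercise that stitches together Theorem \ref{thm: tau hfr}, the structure theorem for finite reduced rings, and the previous classification theorem. The only point requiring mild care is ruling out $\omega(\Gamma(R)) = 1$, which is where the reduced hypothesis is actually used (and where finiteness alone would not suffice), and confirming that no intermediate case slips between $n = 2$ finite fields and the domain case.
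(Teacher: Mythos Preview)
Your proposal is correct and follows essentially the same approach as the paper: use Theorem~\ref{thm: tau hfr} to bound $\omega(\Gamma(R))\leq 2$, invoke the structure theorem for finite reduced rings to write $R\cong K_1\times\cdots\times K_n$, apply Theorem~\ref{thm: ZG results}(7) to force $n\leq 2$, and then cite the previous theorem. The only cosmetic difference is that the paper goes directly through the decomposition and reads off $n\in\{1,2\}$, whereas you first split on the value of $\omega(\Gamma(R))$ and rule out $\omega=1$ explicitly; the content is the same.
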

\begin{proof} ($\Rightarrow$) This is always true. ($\Leftarrow$) $\tau_z$-atomic-HFR implies $\omega(\Gamma(R)) \leq 2$.  Any finite, reduced ring is of the form $R=K_1 \times \cdots \times K_n$ with $K_i$ finite fields.  We recall from Theorem \ref{thm: ZG results} (7) that $\omega(\Gamma(K_1 \times \cdots \times K_n))=n$.  So in fact, we must have $R=K_1$ or $R \cong K_1 \times K_2$ for some finite fields $K_1,K_2$.  Both cases are covered by the previous theorem, so $R$ is a $\tau_z$-atomic-associate-UFR.
\end{proof}
\begin{example} Let $R$ be a finite commutative ring with $1$.  Then $R$ is a $\tau_z^{\Delta}$-HFR does not imply that $R$ is a $\tau_z^{\Delta}$-UFR.
\end{example}
Consider the ring $R=K_1 \times \Z/4\Z$ (with $K_1$ a finite field).  Now $\omega(\Gamma(R))=2$ by \cite[Theorem 3.2]{Davidanderson}, so $R$ is a $\tau_z^{\Delta}$-atomic-HFR by the above theorem.  However, $(0,0)=(1,0)(0,1)=(1,2)(0,2)$ but $(0,2)\not \sim (1,0)$ and $(0,2) \not \sim (0,1)$, showing there exist non-unique $\tau_z^{\Delta}$-factorizations of $0$ in this ring.
\section*{Acknowledgements} I would like to thank Dr. Daniel D. Anderson for his support and encouragement.

{\small \hfill Submitted April 22, 2012}
\end{document}